\documentclass[11pt]{article}

\usepackage{subfig}
\usepackage{mathrsfs}
\usepackage{makecell}
\usepackage{amscd}
\usepackage{amsmath}
\usepackage{amssymb}
\usepackage{amstext}
\usepackage[thmmarks,amsthm,amsmath]{ntheorem}
\usepackage{apptools}
\usepackage{bbold}
\usepackage{bm}
\usepackage{booktabs}
\usepackage{color}
\usepackage{easybmat}
\usepackage{etex}
\usepackage{framed}
\usepackage[dvips,letterpaper,margin=1in]{geometry}
\usepackage{graphicx}
\usepackage{hyperref}
\usepackage[noabbrev,capitalize]{cleveref}
\usepackage{mathtools}
\usepackage{colonequals}
\usepackage{longtable}
\usepackage{rotating}
\usepackage{setspace}
\usepackage{tabu}
\usepackage{verbatim}

%
%


\newtheorem{theorem}{Theorem}[section]
\newtheorem*{remark}{Remark}
\newtheorem{lemma}[theorem]{Lemma}
\newtheorem{question}[theorem]{Question}

\newtheorem{proposition}[theorem]{Proposition}
\newtheorem{definition}[theorem]{Definition}
\newtheorem{corollary}[theorem]{Corollary}
\AtAppendix{\renewtheorem{corollary}[theorem]{Corollary}}

\crefname{theorem}{Theorem}{Theorems}
\crefname{proposition}{Proposition}{Propositions}
\crefname{lemma}{Lemma}{Lemmas}

\theoremstyle{plain} 
\newcommand{\thistheoremname}{}
\newtheorem*{genericthm}{\thistheoremname}

%
%

\newcommand{\what}{\widehat}


\DeclareSymbolFont{bbold}{U}{bbold}{m}{n}
\DeclareSymbolFontAlphabet{\mathbbold}{bbold}



\newcommand{\ra}{\rangle}
\newcommand{\la}{\langle}

\makeatletter
\def\moverlay{\mathpalette\mov@rlay}
\def\mov@rlay#1#2{\leavevmode\vtop{%
   \baselineskip\z@skip \lineskiplimit-\maxdimen
   \ialign{\hfil$\m@th#1##$\hfil\cr#2\crcr}}}
\newcommand{\charfusion}[3][\mathord]{
    #1{\ifx#1\mathop\vphantom{#2}\fi
        \mathpalette\mov@rlay{#2\cr#3}
      }
    \ifx#1\mathop\expandafter\displaylimits\fi}
\makeatother

\renewcommand{\dim}{\mathsf{dim}}

\newcommand{\RR}{\mathbb{R}}

\newcommand{\ZZ}{\mathbb{Z}}

\newcommand{\CC}{\mathbb{C}}

\newcommand{\FF}{\mathbb{F}}
\newcommand{\MM}{\mathbb{M}}

\newcommand{\One}{\mathbbold{1}}


\newcommand{\one}{\bm{1}}


\newcommand{\sB}{\mathcal{B}}

\newcommand{\sP}{\mathcal{P}}


\newcommand{\Tr}{\mathrm{Tr}}

\renewcommand{\ker}{\mathsf{ker}}





%
%


\renewcommand{\vec}{\mathsf{vec}}

\newcommand{\row}{\mathsf{row}}

\newcommand{\conv}{\mathsf{conv}}

\newcommand{\srg}{\mathsf{srg}}
\newcommand{\sym}{\mathsf{sym}}
\newcommand{\herm}{\mathsf{herm}}

\DeclareMathOperator*{\argmin}{\arg\!\min}
\newcommand{\diag}{\mathsf{diag}}

\newcommand{\rank}{\mathsf{rank}}

\newcommand{\bA}{\bm A}
\newcommand{\bB}{\bm B}

\newcommand{\bH}{\bm H}

\newcommand{\bN}{\bm N}
\newcommand{\bP}{\bm P}

\newcommand{\bR}{\bm R}

\newcommand{\bU}{\bm U}
\newcommand{\bV}{\bm V}

\newcommand{\bX}{\bm X}
\newcommand{\bY}{\bm Y}

\newcommand{\ba}{\bm a}
\newcommand{\bb}{\bm b}

\newcommand{\bv}{\bm v}

\newcommand{\bx}{\bm x}
\newcommand{\by}{\bm y}

\newcommand{\fC}{\mathscr{C}}
\newcommand{\fE}{\mathscr{E}}

\newcommand{\pert}{\mathsf{pert}}

\pdfstringdefDisableCommands{%
}

\title{Sum-of-Squares Optimization and the Sparsity Structure of Equiangular Tight Frames}
\author{%
  Afonso S.\ Bandeira\footnote{Courant Institute of Mathematical Sciences and Center for Data Science, New York University, NY 10012. Afonso S. Bandeira was partially supported by NSF
grants DMS-1712730 and DMS-1719545, and by a grant from the Sloan
Foundation.} \and Dmitriy Kunisky\footnote{Courant Institute of Mathematical Sciences, New York University, NY 10012. Dmitriy Kunisky was partially supported by NSF
grants DMS-1712730 and DMS-1719545.}
}

\begin{document}

\maketitle

\begin{abstract}
    Equiangular tight frames (ETFs) may be used to construct examples of feasible points for semidefinite programs arising in sum-of-squares (SOS) optimization.
    We show how generalizing the calculations in a recent work of the authors' that explored this connection also yields new bounds on the sparsity of (both real and complex) ETFs.
    One corollary shows that Steiner ETFs corresponding to finite projective planes are optimally sparse in the sense of achieving tightness in a matrix inequality controlling overlaps between sparsity patterns of distinct rows of the synthesis matrix.
    We also formulate several natural open problems concerning further generalizations of our technique.
\end{abstract}

\newpage

\section{Introduction}

One of the most important objects in combinatorial optimization is the \emph{cut polytope}, the convex set of matrices
\begin{equation}
    \fC^N \colonequals \conv\left(\left\{ \bx\bx^\top: \bx \in \{ \pm 1\}^N\right\}\right) \subset \RR^{N \times N}_{\sym}.
\end{equation}
The cut polytope has a rich discrete geometry \cite{deza:09:book} describing the  solution space of the problem of finding the largest cut in a graph, which amounts to maximizing a linear function over $\fC^N$.
By the classical result of Karp \cite{karp:72}, this problem cannot be solved in polynomial time unless $\mathsf{P} = \mathsf{NP}$.
Therefore, \emph{relaxations} of $\fC^N$ to larger but more algorithmically tractable convex sets have been proposed, perhaps the best-known of which is the relaxation to the \emph{(real) elliptope}
\begin{equation}
    \fE^N \colonequals \left\{\bX \in \RR^{N \times N}_{\sym}: \bX \succeq \bm 0, \diag(\bX) = \one\right\} \supseteq \fC^N.
\end{equation}
There is extensive literature both on the geometry of $\fE^N$ (thoroughly described in \cite{deza:09:book}), and on approximating optimization over $\fC^N$ by optimization over $\fE^N$ (e.g.\ \cite{goemans:95,nesterov:98,feige:02,khot:07}).

Every $\bX \in \fE^N$ is the Gram matrix of unit vectors, $X_{ij} = \la \bv_i, \bv_j \ra$ for some $\bv_1, \dots, \bv_N \in \RR^{r}$ with $r \colonequals \rank(\bX) \leq N$.
The boundary of $\fE^N$ consists of $\bX$ having $r < N$.
In the language of frame theory, these boundary points are the Gram matrices of \emph{overcomplete unit norm frames}.
We will explore applications of geometric results about $\fE^N$ to the following types of structured frames.

\begin{definition}
    Unit vectors $\bv_1, \dots, \bv_N \in \CC^r$ with $\bX = (\la \bv_i, \bv_j \ra)_{i, j = 1}^N$ form a \emph{unit norm tight frame (UNTF)} if any of the following equivalent conditions hold.
    \begin{enumerate}
    \item $\sum_{i = 1}^N\bv_i\bv_i^* = \frac{N}{r} \bm I_N$.
    \item The nonzero eigenvalues of $\bX$ all equal $\frac{N}{r}$.
    \item $\|\bX\|_F^2 = \sum_{i = 1}^N \sum_{j = 1}^N |X_{ij}|^2 = \frac{N^2}{r}$.
    \end{enumerate}
    The $\bv_i$ form an \emph{equiangular tight frame (ETF)} if moreover there exists  $\alpha \in [0, 1]$ such that $|X_{ij}| = |\la \bv_i, \bv_j \ra| = \alpha$ whenever $i \neq j$.
\end{definition}
When moreover $\bv_i \in \RR^r$, we have $\bX \in \fE^N$, and such points form an interesting subset of the elliptope's boundary.
In both the real and complex cases, UNTFs and ETFs have been studied in great detail previously (e.g.\ \cite{benedetto:03,casazza:12,tropp:07,casazza:08,fickus:15:tables}).

The set $\fE^N$, however, is only the first of a sequence of tightening relaxations of $\fC^N$, described by the \emph{sum-of-squares (SOS)} hierarchy.
This gives sets we call \emph{degree $d$ generalized elliptopes} $\fE_d^N$ for positive even integers $d$.
\begin{definition}
    \label{def:pm-matrix}
    $\fE_d^N$ is the set of $\bX \in \RR^{N \times N}_{\sym}$ for which there exists $\bY \in \RR^{N^{d/2} \times N^{d / 2}}_{\sym}$ where, identifying indices of $\bY$ with elements of $[N]^{d / 2}$, the following conditions hold.
    \begin{enumerate}
    \item $Y_{\bm i \bm j} = 1$ whenever all indices occur an even number of times across $\bm i$ and $\bm j$.
    \item $Y_{\bm i \bm j}$ depends only on the set of indices occuring an odd number of times across $\bm i$ and $\bm j$.
    \item $Y_{(1, \dots, 1, i)(1, \dots, 1, j)} = X_{ij}$ for all $i, j \in [N]$.
    \item $\bY \succeq \bm 0$.
    \end{enumerate}
\end{definition}
We have $\fC^N \subseteq \fE_d^N$ since both sets are convex and $\bx\bx^\top \in \fE_d^N$ for $\bx \in \{ \pm 1\}^N$ by taking $\bY = (\bx^{\otimes d / 2})(\bx^{\otimes d / 2})^\top$.
The following inclusions also hold (these non-trivial facts are the combined results of \cite{fawzi:16} and \cite{laurent:03}):
\begin{equation}
    \fE^N = \fE_2^N \supsetneq \fE_4^N \supsetneq \cdots \supsetneq \fE_{N + \One\{N \text{ odd}\}}^N = \fC^N.
\end{equation}

In the recent work \cite{bandeira:18}, the authors showed that the Gram matrix of a real ETF belongs to $\fE_4^N$ if and only if $N < \frac{r(r + 1)}{2}$.
The construction involves the following general concept from convex geometry.
\begin{definition}
    For a closed convex set $K \subseteq \RR^d$ and $\bX \in K$, the \emph{perturbation of $\bX$ in $K$} is the subspace
    \begin{equation}
        \mathsf{pert}_K(\bX) \colonequals \left\{\bA \in \RR^d: \exists \hspace{0.3em} t > 0 \text{ with } \bX \pm t\bA \in K \right\}.
    \end{equation}
\end{definition}
We then showed in \cite{bandeira:18} that one may take
\begin{equation}
    \bY \colonequals \vec(\bX)\vec(\bX)^\top + \frac{N^2(1 - \frac{1}{r})}{\frac{r(r + 1)}{2} - N}\bP
\end{equation}
for $\bP$ the projection matrix to $\vec(\pert_{\fE_2^N}(\bX))$.
To verify the conditions of Definition~\ref{def:pm-matrix}, we computed $\bP$ for all real ETFs.

In this paper, we give a self-contained presentation of this calculation for perturbation subspaces in the complex generalization of the elliptope,
\begin{equation}
    \widetilde{\fE}^N \colonequals \left\{ \bX \in \CC^{N \times N}_{\herm}: \bX \succeq \bm 0, \diag(\bX) = \one\right\}.
\end{equation}
Using that this projection operator is positive semidefinite (psd), we derive inequalities in degree 4 polynomials of the entries of (real or complex) ETF vectors, which translate to new inequalities controlling the sparsity and spark of ETF vectors.

\subsection{Background}
\label{sec:background}

\subsubsection{Linear Algebra}

We first introduce some notations for standard linear-algebraic tools we will use.
Denote by $\CC^{N \times N}_{\herm}$ the set of $N \times H$ Hermitian matrices.
We recall the usual Hilbert space structure on $\CC^{N \times N}_{\herm}$.
\begin{definition}
  For $\bX, \bY \in \CC^{N \times N}_{\herm}$, define the \emph{Frobenius inner product} and associated \emph{Frobenius norm} as
  \begin{align}
    \la \bX, \bY \ra_{F} &\colonequals \Tr(\bX\bY) = \sum_{i = 1}^N\sum_{j = 1}^N X_{ij}^* Y_{ij} \in \RR, \\
    \|\bX\|_{F} &\colonequals \Tr(\bX^2)^{1/2} = \la \bX, \bX \ra_F^{1/2} = \left(\sum_{i = 1}^N \sum_{j = 1}^N |X_{ij}|^2\right)^{1/2} \geq 0.
  \end{align}
\end{definition}
We will also use the following entrywise transformations of matrices.
\begin{definition}
    For $\bA \in \CC^{a \times b}$, let $|\bA|$ denote the entrywise absolute value of $\bA$.
\end{definition}

\begin{definition}
    For $\bA \in \CC^{a \times b}$, let $\bA^{\odot k}$ denote the entrywise $k$th power of $\bA$ for $k \in \ZZ$.
\end{definition}

\subsubsection{Combinatorics}

\label{sec:background:comb}

We present some definitions of combinatorial objects involved in the construction of \emph{Steiner ETFs} that we will study in greater detail in the sequel (the construction itself is discussed in Section~\ref{sec:steiner}).
A full motivation of these definitions is beyond the scope of the present paper; we refer the reader to \cite{colbourn:06} for further information and references.

The following highly structured graphs are intimately related to general real ETFs, and will also play a role in analyzing our results for the special case of Steiner ETFs.
\begin{definition}
    A \emph{strongly regular graph with parameters $(v, k, \lambda, \mu)$}, abbreviated $\srg(v, k, \lambda, \mu)$, is a graph $G = (V, E)$ for which $|V| = v$, $G$ is $k$-regular, and every pair of distinct vertices in $G$ have $\lambda$ common neighbors if they are adjacent and $\mu$ common neighbors if they are not adjacent.
\end{definition}

The first type of combinatorial design we will be interested in for ETF constructions is the following generalization of the incidence structure of finite geometries.
\begin{definition}
    A \emph{$(t, k, v)$-Steiner system} is a pair $(S, \mathcal{B})$ where $|S| = v$ and $\mathcal{B} \subseteq 2^S$, such that for all $B \in \mathcal{B}$ we have $|B| = k$, and for any $C \subseteq S$ with $|C| = t$, there exists a unique $B \in \mathcal{B}$ such that $C \subseteq B$.
    The elements of $S$ are called \emph{points}, and the elements of $\mathcal{B}$ are called \emph{blocks}.
    A Steiner system is completely specified by its \emph{incidence matrix} $\bN \in \{0, 1\}^{v \times |\mathcal{B}|}$, with entries equal to 1 when the block index contains the point index and equal to 0 otherwise.
    Also associated to a Steiner system is its \emph{block intersection graph}, a graph on vertex set $\mathcal{B}$ where $B, B^\prime \in \mathcal{B}$ are adjacent if and only if $B \cap B^\prime \neq \emptyset$.
\end{definition}

\noindent
The following are basic combinatorial results on Steiner systems with $t = 2$.
\begin{proposition}
    Let $(S, B)$ be a $(2, k, v)$-Steiner system.
    Then, $b \colonequals |\mathcal{B}| = \frac{v(v - 1)}{k(k - 1)}$, and each point is contained in $\rho \colonequals \frac{v - 1}{k - 1}$ blocks (typically this is denoted $r$ in the combinatorics literature, which we adjust to avoid conflict with our notation for frame dimensions).
\end{proposition}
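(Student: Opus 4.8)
The plan is to prove both claims by elementary double-counting arguments, first establishing the point-replication number $\rho$ and then deducing the block count $b$ from it.

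First I would fix an arbitrary point $p \in S$ and count incident pairs. Consider the collection of two-element subsets $\{p, q\}$ with $q \in S \setminus \{p\}$; there are exactly $v - 1$ of these. The defining property of a $(2, k, v)$-Steiner system (that is, the case $t = 2$) guarantees that each such pair is contained in a unique block. Conversely, every block $B$ containing $p$ consists of $p$ together with $k - 1$ other points, and so accounts for exactly $k - 1$ pairs of the form $\{p, q\}$, each lying in $B$ and in no other block. Summing over the blocks through $p$ therefore counts each of the $v - 1$ pairs exactly once, yielding $(\text{number of blocks through } p) \cdot (k - 1) = v - 1$. Since the right-hand side does not depend on the choice of $p$, every point lies in exactly $\rho = \frac{v - 1}{k - 1}$ blocks.

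Next I would count point-block incidences (flags) in two ways to obtain $b$. Summing over points, each of the $v$ points lies in $\rho$ blocks, for a total of $v\rho$ incidences; summing over blocks, each of the $b$ blocks contains exactly $k$ points, for a total of $bk$ incidences. Equating the two counts gives $bk = v\rho$, whence $b = \frac{v\rho}{k} = \frac{v(v - 1)}{k(k - 1)}$.

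There is no serious obstacle here; the only point requiring care is that the replication number $\rho$ must be shown to be the same for every point \emph{before} it can be used in the flag-counting step, which is precisely what the first count establishes by producing a value independent of $p$. As a byproduct, the two formulas impose the divisibility constraints $(k - 1) \mid (v - 1)$ and $k(k - 1) \mid v(v - 1)$ as necessary conditions for the existence of such a system.
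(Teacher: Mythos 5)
Your proof is correct and complete: the pair-counting argument around a fixed point $p$ (each of the $v-1$ pairs $\{p,q\}$ lies in a unique block, and each block through $p$ accounts for exactly $k-1$ such pairs) gives $\rho = \frac{v-1}{k-1}$ independent of $p$, and the flag count $bk = v\rho$ then yields $b = \frac{v(v-1)}{k(k-1)}$. The paper states this proposition without proof, treating it as a standard fact from design theory (deferring to \cite{colbourn:06}), so there is nothing to compare against; your argument is the standard double-counting proof, and you are right to emphasize that the replication number must be shown uniform over points before it can be used in the flag count.
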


\noindent
Important examples of Steiner systems with $t = 2$ are given by finite projective and affine planes.
\begin{definition}
    A $(2, k, v)$-Steiner system $(S, \sB)$ is called a \emph{finite projective plane} if for any distinct $B, B^\prime \in \sB$ we have $|B \cap B^\prime| = 1$, and there exist $w, x, y, z \in S$ such that no $B \in \sB$ contains more than two of these points.
    The number $k - 1$ is called the \emph{order} of a finite projective plane.

    A $(2, k, v)$-Steiner system $(S, \sB)$ is called an \emph{finite affine plane} if $|B| \geq 2$ for all $B \in \sB$, for any $B \in \sB$ and $s \in S \setminus B$ there exists a unique $B^\prime \in \sB$ with $s \in B^\prime$ and $B \cap B^\prime = \emptyset$, and there exist $x, y, z \in S$ such that no $B \in \sB$ contains all three of these points.
    The number $k$ is called the \emph{order} of a finite affine plane.
\end{definition}

\begin{proposition}
    \label{prop:finite-planes}
    A finite projective plane of order $q$ is a $(2, q + 1, q^2 + q + 1)$-Steiner system.
    A finite affine plane of order $q$ is a $(2, q, q^2)$-Steiner system.
\end{proposition}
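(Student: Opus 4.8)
The plan is to reduce both claims to a single count: the number of blocks through a fixed point. The preceding proposition tells us every point lies on exactly $\rho = \frac{v-1}{k-1}$ blocks, so once I compute $\rho$ in terms of $q$ I can solve for $v$. In both cases I would fix an arbitrary block $B_0$, choose a point $p \notin B_0$, and count the blocks through $p$ by matching them against the points of $B_0$. The existence of such a $p$ is precisely where the non-degeneracy hypotheses are used: in the projective case the four points $w, x, y, z$ satisfy $|B_0 \cap \{w, x, y, z\}| \leq 2$, so at least two of them lie off $B_0$; in the affine case no block contains all of $x, y, z$, so at least one of these lies off $B_0$.

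For the projective plane, the defining property that any two distinct blocks meet in exactly one point does the work. Since $p \notin B_0$, every block through $p$ is distinct from $B_0$ and hence meets $B_0$ in a single point, defining a map from the blocks through $p$ to $B_0$. This map is injective, since two blocks through $p$ meeting $B_0$ at a common point $x$ (with $x \neq p$, as $x \in B_0$ and $p \notin B_0$) both contain the pair $\{p, x\}$ and therefore coincide; it is surjective, since for each $x \in B_0$ the unique block on $\{p, x\}$ passes through $p$ and meets $B_0$ at $x$. Hence $\rho = |B_0| = k = q + 1$, and solving $q + 1 = \frac{v - 1}{k - 1} = \frac{v - 1}{q}$ gives $v = q^2 + q + 1$.

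For the affine plane the same bijection must be corrected by a single parallel block, and arranging this cleanly is the main obstacle. I would first check that the relation of parallelism, defined by $B \parallel B'$ if and only if $B = B'$ or $B \cap B' = \emptyset$, is an equivalence relation; transitivity is the only nontrivial point and follows from the parallel postulate applied to a hypothetical common point of two blocks parallel to a third. With $p \notin B_0$ fixed, the parallel postulate supplies exactly one block through $p$ disjoint from $B_0$; by uniqueness, every other block $B$ through $p$ has $B \neq B_0$ and $B \cap B_0 \neq \emptyset$, so it meets $B_0$ in exactly one point (distinct blocks of a Steiner system share at most one point). The injectivity/surjectivity argument above then puts these non-parallel blocks in bijection with the $q$ points of $B_0$, and counting the one parallel block as well yields $\rho = q + 1$; solving $q + 1 = \frac{v - 1}{k - 1} = \frac{v - 1}{q - 1}$ gives $v = q^2$. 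The only genuine care required is bookkeeping: confirming transitivity of parallelism from the parallel postulate, and carefully distinguishing \emph{meets in exactly one point} (the projective axiom) from \emph{meets in at most one point} (the Steiner property) so that the affine count correctly separates the unique parallel block from the $q$ secant blocks.
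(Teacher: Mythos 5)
Your proof is correct. Note that the paper itself states this proposition without proof, treating it as standard background on combinatorial designs (deferring to references such as \cite{colbourn:06}), so there is no in-paper argument to compare against; your write-up supplies the standard counting argument and does so completely. Both counts are sound: fixing a block $B_0$ and a point $p \notin B_0$ (whose existence you correctly extract from the non-degeneracy axioms), you establish a bijection between the blocks through $p$ meeting $B_0$ and the points of $B_0$, then combine $\rho = q+1$ with the earlier formula $\rho = \frac{v-1}{k-1}$ to solve for $v$ in each case. One remark: the verification that parallelism is an equivalence relation, which you flag as ``the main obstacle,'' is actually superfluous. Your affine count never invokes transitivity --- it only needs the parallel postulate applied once at $p$ (exactly one block through $p$ is disjoint from $B_0$) together with the Steiner property that distinct blocks share at most one point, which is precisely how your own argument proceeds. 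Dropping that step would shorten the proof without weakening it.
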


\noindent
The most common specific constructions of finite affine planes of order $q$ have as points the elements of $\FF^2$ for a finite field $\FF$ with $|\FF| = q$, and as lines the one-dimensional affine subspaces in this vector space over $\FF$.
Analogously, the most common specific constructions of finite projective planes of order $q$ have as points the one-dimensional linear subspaces of $\FF^{3}$, and as blocks the two-dimensional linear subspaces of $\FF^{3}$.
These give finite projective and affine planes of any order $q$ equal to a prime power; the question of whether there exist finite projective or affine planes of other orders is a longstanding open problem in combinatorics.

The other ingredient in the Steiner ETF construction is the following class of matrices, which describe certain structured orthonormal bases of $\CC^N$.
\begin{definition}
    A \emph{(complex) Hadamard matrix} is a matrix $\bH \in \CC^{N \times N}$ such that $|H_{ij}| = 1$ for all $i, j \in [N]$, and $\bH\bH^* = N \bm I_N$.
\end{definition}

\subsubsection{Equiangular Tight Frames: General Theory}

We next review some background on the theory of ETFs.
ETFs do not exist in all pairs of dimension $(N, r)$, and the known constructions are mostly based on exceptional combinatorial structures.
Determining for which dimensions ETFs do exist is thus an important open problem with implications ranging many topics in combinatorics.
Real-valued ETFs are better understood than the more general complex-valued ETFs, and the techniques for treating these two variants are often somewhat different.
In particular, there are several correspondences between real ETFs and strongly regular graphs \cite{fickus:15,fickus:16:centroidal} which allow combinatorial constructions to immediately yield real ETFs, while no such general connections are known to hold for complex ETFs.
More comprehensive references on these aspects of the theory of ETFs include \cite{tropp:07,casazza:08,fickus:15:tables}.

The following important result shows that ETFs are extremal among UNTFs in the sense of \emph{worst-case coherence}.
Moreover, when an ETF exists, $\alpha$ is determined by $N$ and $r$.
\begin{proposition}[Welch Bound \cite{welch:74}]
    \label{prop:etf-welch-bound}
    If $\bv_1, \dots, \bv_N \in \CC^{r}$ with $\|\bv_i\|_2 = 1$, then
    \begin{equation}
        \max_{1 \leq i, j \leq N \atop i \neq j} |\la \bv_i, \bv_j \ra| \geq \sqrt{\frac{N - r}{r(N - 1)}},
    \end{equation}
    with equality if and only if $\bv_1, \dots, \bv_N$ form an ETF.
\end{proposition}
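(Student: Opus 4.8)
The plan is to analyze the Gram matrix $\bX = (\la \bv_i, \bv_j \ra)_{i,j=1}^N$ by computing its squared Frobenius norm $\|\bX\|_F^2 = \Tr(\bX^2)$ in two different ways and comparing them. Writing $\bV \in \CC^{r \times N}$ for the matrix whose columns are $\bv_1, \dots, \bv_N$, we have $\bX = \bV^* \bV$, so that $\bX$ is positive semidefinite of rank at most $r$. The first computation expands the norm entrywise: since the diagonal entries are $X_{ii} = \|\bv_i\|_2^2 = 1$, we get $\|\bX\|_F^2 = N + \sum_{i \neq j} |X_{ij}|^2$. The second computation passes to the frame operator $\bS \colonequals \bV\bV^* = \sum_{i=1}^N \bv_i \bv_i^* \in \CC^{r \times r}_{\herm}$, using the cyclic property of the trace to write $\Tr(\bX^2) = \Tr((\bV^*\bV)^2) = \Tr((\bV\bV^*)^2) = \Tr(\bS^2)$.

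First I would lower-bound $\Tr(\bS^2)$. Since $\bS$ is an $r \times r$ positive semidefinite matrix with $\Tr(\bS) = \sum_i \|\bv_i\|_2^2 = N$, applying Cauchy--Schwarz to its eigenvalues (equivalently, the power-mean inequality $\sum_i \lambda_i^2 \geq \frac{1}{r}(\sum_i \lambda_i)^2$) gives $\Tr(\bS^2) \geq \frac{N^2}{r}$. Next I would upper-bound the off-diagonal contribution: writing $\alpha_{\max} \colonequals \max_{i \neq j} |X_{ij}|$, we have $\sum_{i \neq j} |X_{ij}|^2 \leq N(N-1)\alpha_{\max}^2$. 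Chaining the two computations through these bounds yields
\begin{equation}
    \frac{N^2}{r} \leq \Tr(\bS^2) = \|\bX\|_F^2 = N + \sum_{i \neq j} |X_{ij}|^2 \leq N + N(N-1)\alpha_{\max}^2,
\end{equation}
and solving for $\alpha_{\max}$ after a short simplification gives $\alpha_{\max}^2 \geq \frac{N-r}{r(N-1)}$, which is the claimed bound.

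For the equality characterization, I would trace through the two inequalities used, noting that the middle of the chain is an exact identity, so equality in the Welch bound holds if and only if equality holds in \emph{both} of the bounding steps. Equality in $\Tr(\bS^2) \geq \frac{N^2}{r}$ holds exactly when all $r$ eigenvalues of $\bS$ are equal, i.e.\ when $\bS = \frac{N}{r}\bm I_r$, which is precisely the tight-frame condition of the definition of a UNTF. Equality in $\sum_{i \neq j}|X_{ij}|^2 \leq N(N-1)\alpha_{\max}^2$ holds exactly when $|X_{ij}| = \alpha_{\max}$ for every $i \neq j$, which is precisely equiangularity with $\alpha = \alpha_{\max}$. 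Hence equality holds if and only if $\bv_1, \dots, \bv_N$ form an ETF, and in that case $\alpha$ is forced to equal $\sqrt{(N-r)/(r(N-1))}$, which also gives the asserted fact that $\alpha$ is determined by $N$ and $r$.

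The two computations of $\|\bX\|_F^2$ and the final algebra are routine. The point requiring the most care is the equality analysis, specifically the translation of tightness in the eigenvalue Cauchy--Schwarz step into $\bS$ being a scalar multiple of the identity rather than merely having equal \emph{nonzero} eigenvalues: here it is essential that $\bS$ is genuinely $r \times r$, since a rank deficiency would produce a zero eigenvalue, and then equality in Cauchy--Schwarz would force \emph{all} eigenvalues to vanish, contradicting $\Tr(\bS) = N > 0$. Thus equality forces $\bS$ to have full rank with every eigenvalue equal to $\frac{N}{r}$, consistent with the standing requirement $r \leq N$.
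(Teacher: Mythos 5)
Your proof is correct. Note that the paper itself does not prove this proposition: it is stated as a classical result with a citation to Welch's original work, so there is no in-paper argument to compare against. Your argument is the standard double-counting proof --- computing $\|\bX\|_F^2 = \Tr(\bS^2)$ via the frame operator $\bS = \bV\bV^*$ and via the entries of $\bX$, then combining the eigenvalue Cauchy--Schwarz bound $\Tr(\bS^2) \geq N^2/r$ with the trivial bound $\sum_{i \neq j} |X_{ij}|^2 \leq N(N-1)\alpha_{\max}^2$ --- and every step checks out. The equality analysis is handled properly: since the middle of the chain is an identity, equality forces tightness in both bounding steps, which is exactly the conjunction of the tight-frame condition ($\bS = \frac{N}{r}\bm I_r$, condition 1 of the paper's UNTF definition) and equiangularity, i.e.\ the paper's definition of an ETF. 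Your closing remark is also the right point to be careful about: the Cauchy--Schwarz step runs over all $r$ eigenvalues of $\bS$ including any zeros, so equality genuinely forces $\bS$ to be a multiple of the identity (full rank), not merely to have equal nonzero eigenvalues --- a distinction that matters precisely because condition 2 of the paper's UNTF definition is phrased in terms of nonzero eigenvalues of $\bX$. One could also observe that your chain cleanly recovers the paper's remark preceding the proposition, that $\alpha$ is forced to equal $\sqrt{(N-r)/(r(N-1))}$ whenever an ETF exists.
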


The other important limitation on ETFs that we will be concerned with is on the maximum number of vectors in $\RR^r$ or $\CC^r$ that can form an ETF.
We include the proof of this result, since it is short and involves a matrix associated to an ETF that will play an important role in our later calculations.
\begin{proposition}[Gerzon Bound \cite{lemmens:91}]
    \label{prop:etf-gerzon-bound}
    Let $\FF \in \{ \RR, \CC \}$, and let $\bv_1, \dots, \bv_N \in \FF^r$ form an ETF with coherence $\alpha < 1$.
    Then,
    \begin{equation}
        N \leq \left\{\begin{array}{ccl} r(r + 1) / 2 & : & \FF = \RR \\ r^2 & : & \FF = \CC \end{array}\right\}.
    \end{equation}
\end{proposition}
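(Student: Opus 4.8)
The plan is to realize each frame vector as a rank-one positive semidefinite matrix and to show that the resulting $N$ matrices are linearly independent inside an ambient real vector space whose dimension is exactly the claimed bound. Concretely, I would associate to each $\bv_i$ the outer product $\bP_i \colonequals \bv_i \bv_i^*$. When $\FF = \RR$ these are real symmetric $r \times r$ matrices, hence lie in $\RR^{r \times r}_{\sym}$, a real vector space of dimension $\frac{r(r+1)}{2}$; when $\FF = \CC$ they are Hermitian $r \times r$ matrices, hence lie in $\CC^{r \times r}_{\herm}$, a real vector space of dimension $r^2$. The bound follows immediately once linear independence of $\bP_1, \dots, \bP_N$ is established, since $N$ linearly independent vectors cannot exceed the dimension of their span.

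To prove linear independence, I would compute the Gram matrix $\bG \in \RR^{N \times N}$ of the $\bP_i$ with respect to the Frobenius inner product. Using the cyclic property of the trace,
\begin{equation}
    \la \bP_i, \bP_j \ra_F = \Tr(\bv_i \bv_i^* \bv_j \bv_j^*) = |\la \bv_i, \bv_j \ra|^2,
\end{equation}
so that the diagonal entries equal $\|\bv_i\|_2^4 = 1$ and, by the equiangularity hypothesis, the off-diagonal entries all equal $\alpha^2$. Thus
\begin{equation}
    \bG = (1 - \alpha^2)\bm I_N + \alpha^2 \one\one^\top.
\end{equation}

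The remaining step is to verify $\bG \succ \bm 0$, which forces the $\bP_i$ to be linearly independent (a Gram matrix is singular precisely when its generating family is dependent). The eigenvalues of $\bG$ are $1 - \alpha^2$ with multiplicity $N - 1$ and $1 + (N - 1)\alpha^2$ with multiplicity one; since $\alpha < 1$ by hypothesis, both are strictly positive, so $\bG$ is nonsingular and the $\bP_i$ are independent, yielding $N \leq \frac{r(r+1)}{2}$ in the real case and $N \leq r^2$ in the complex case. The only point requiring care is the role of the hypothesis $\alpha < 1$: without it the factor $1 - \alpha^2$ could vanish and the independence argument would collapse, matching the fact that the bound genuinely needs a non-degenerate (equiangular, non-collinear) frame. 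This is the crux of the argument, although here it is mild; the real content is simply the observation that equiangularity makes $\bG$ a positive multiple of the identity plus a rank-one correction.
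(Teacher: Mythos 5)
Your proposal is correct and is essentially identical to the paper's own proof: both realize the frame vectors as the rank-one matrices $\bv_i\bv_i^*$ in $\RR^{r\times r}_{\sym}$ or $\CC^{r\times r}_{\herm}$, observe that their Frobenius Gram matrix is $(1-\alpha^2)\bm I_N + \alpha^2\one\one^\top$, and use $\alpha < 1$ to conclude non-singularity, hence linear independence and the dimension bound. Your explicit eigenvalue computation is a minor elaboration of the paper's one-line non-singularity claim, not a different argument.
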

\begin{proof}
    Let $\MM = \RR^{r \times r}_{\sym}$ if $\FF = \RR$ and $\MM = \CC^{r \times r}_{\herm}$ if $\FF = \CC$.
    For all $i \neq j$, $\la \bv_i\bv_i^*, \bv_j\bv_j^* \ra = \alpha^2$.
    Thus, the Gram matrix of the $\bv_i\bv_i^* \in \MM$ is
    \begin{equation}
        |\bX|^{\odot 2} = (1 - \alpha^2) \bm I_N + \alpha^2 \one\one^\top,
    \end{equation}
    which is non-singular since $\alpha < 1$.
    The $\bv_i\bv_i^*$ are then linearly independent, so $N \leq \dim(\MM)$, and the result follows.
\end{proof}

Lastly, an involution of ETFs called the \emph{Naimark complement} will play an important role in some of our reasoning.
In the sequel, we will sometimes identify an ETF $\bv_1, \dots, \bv_N \in \CC^r$ with its so-called \emph{synthesis matrix} $\bV \in \CC^{r \times N}$, the matrix whose columns are the $\bv_i$.
Since the rows of $\bV$ are orthogonal and have equal norm, $\bV$ may be completed by further rows to form a scaled orthogonal matrix.
The added rows form a matrix, which we denote $\bV^\prime \in \CC^{(N - r) \times N}$, which also has orthogonal rows of equal norm and columns of equal norm, and hence, suitably scaled, is the synthesis matrix of a new UNTF consisting of $N$ vectors in $\CC^{N - r}$.
Moreover, one may verify that this UNTF is in fact another ETF (with different coherence).
Choosing among the possible completions to an orthogonal matrix appropriately, we may guarantee that the Naimark complement is an involution of ETFs: $\bV^{\prime\prime} = \bV$ for every ETF $\bV$.

\subsubsection{Equiangular Tight Frames: Sparsity and Spark}

Two quantities of interest for ETFs are the \emph{sparsity}, the number of non-zero entries of the ETF vectors or the synthesis matrix, and the \emph{spark}, the smallest number of ETF vectors involved in a non-trivial linear dependency.
The conventional definition of the latter is
\begin{equation}
    \mathsf{spark}(\bV) \colonequals \min_{\substack{\bx \in \RR^N \setminus \{\bm 0\} \\ \bV\bx = \bm 0}} \|\bx\|_0 = \min_{\bx \in \row(\bV)^{\perp} \setminus \{ \bm 0\}} \|\bx\|_0.
\end{equation}
The natural dual measure of sparsity, sometimes called \emph{cospark}, is
\begin{equation}
    \mathsf{sparsity}(\bV) \colonequals \min_{\bx \in \row(\bV) \setminus \{ \bm 0\}} \|\bx\|_0.
\end{equation}
This is a stronger notion that merely the sparsity of the synthesis matrix $\bV$ itself, but our results are most naturally stated in these terms.
One convenience of working with this notion of sparsity is that, since the Naimark complement exchanges the row space and kernel of an ETF synthesis matrix, our sparsity is in fact merely the spark of the Naimark complement:
\begin{equation}
    \label{eq:sparsity-spark-naimark}
    \mathsf{sparsity}(\bV) = \mathsf{spark}(\bV^\prime).
\end{equation}

We will later be concerned with the quality of certain lower bounds on the spark (and thus, via the above remark, the sparsity) of ETFs, so we briefly review the existing general lower bounds we are aware of.
The simplest spark lower bound is based on an argument via the Gershgorin circle theorem \cite{donoho:03}, which we present below.
\begin{proposition}
    \label{prop:gershgorin-spark-bound}
    If $\bV$ is an ETF with coherence $\alpha$, then $\mathsf{spark}(\bV) \geq 1 + \alpha^{-1}$.
\end{proposition}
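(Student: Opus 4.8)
The plan is to translate the definition of spark into a statement about a principal submatrix of the full Gram matrix $\bX$ being singular, and then to bound the minimum eigenvalue of that submatrix from below using the Gershgorin circle theorem. Writing $k \colonequals \mathsf{spark}(\bV)$, by definition there is a set $S \subseteq [N]$ with $|S| = k$ such that the columns $\{\bv_i\}_{i \in S}$ of $\bV$ are linearly dependent; equivalently, the $k \times k$ principal submatrix $\bG \colonequals (\la \bv_i, \bv_j \ra)_{i, j \in S}$, which is the Gram matrix of these $k$ vectors, is singular. The goal is then to show $k \geq 1 + \alpha^{-1}$.

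First I would record the structure of $\bG$: since the $\bv_i$ are unit vectors we have $G_{ii} = 1$, and since $\bV$ is an ETF with coherence $\alpha$ we have $|G_{ij}| = \alpha$ for every off-diagonal pair $i \neq j$ in $S$. Thus each row of $\bG$ has diagonal entry $1$ and exactly $k - 1$ off-diagonal entries, each of modulus $\alpha$, so the off-diagonal absolute row sums all equal $(k - 1)\alpha$. Next I would apply Gershgorin's circle theorem to $\bG$: every eigenvalue $\lambda$ of $\bG$ satisfies $|\lambda - 1| \leq (k - 1)\alpha$ for some row index, and hence every eigenvalue obeys $\lambda \geq 1 - (k - 1)\alpha$.

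Finally, because $\bG$ is singular, $0$ is among its eigenvalues, so the lower bound forces $0 \geq 1 - (k - 1)\alpha$, i.e. $(k - 1)\alpha \geq 1$, which rearranges to $k \geq 1 + \alpha^{-1}$ as desired. There is no real obstacle here beyond bookkeeping; the only point requiring care is the equivalence between the existence of a $k$-term linear dependency among the columns and the singularity of the corresponding principal Gram submatrix, which holds because the Gram matrix of a finite set of vectors is singular precisely when those vectors are linearly dependent. One could equivalently phrase the conclusion contrapositively: if $k \leq \alpha^{-1}$, then $(k-1)\alpha < 1$, so $\bG$ is strictly positive definite and the $k$ chosen vectors are linearly independent, whence no dependency of size at most $\alpha^{-1}$ can exist.
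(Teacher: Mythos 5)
Your proof is correct and is essentially the same argument as the paper's: both apply the Gershgorin circle theorem to a principal submatrix of the Gram matrix $\bV^*\bV$, whose diagonal entries are $1$ and whose off-diagonal entries have modulus $\alpha$, to conclude that any submatrix of size at most $\alpha^{-1}$ is non-singular. The paper states this contrapositively (every $\alpha^{-1} \times \alpha^{-1}$ principal minor is non-singular), which is exactly the rephrasing you note at the end of your argument.
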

\begin{proof}
    One may easily check the following alternative formulation of the spark:
    \[ \mathsf{Spark}(\bV) = 1 + \max\{k: \text{every } k \times k \text{ principal minor of } \bV^\top\bV \text{ is non-singular}\}. \]
    It then suffices to show that every $\alpha^{-1} \times \alpha^{-1}$ principal minor is non-singular.
    Such a minor is a $\alpha^{-1} \times \alpha^{-1}$ matrix whose diagonal entries are 1 and whose off-diagonal entries are $\alpha$ in magnitude.
    By the Gershgorin circle theorem, the eigenvalues of such a matrix are all at least $1 - \frac{\alpha^{-1} - 1}{\alpha^{-1}} > 0$, hence it is non-singular.
\end{proof}
\noindent
Another lower bound on the spark follows from considerations of so-called \emph{numerically erasure-robust frames (NERFs)} in \cite{fickus:12:nerfs}.\footnote{We thank Dustin Mixon for bringing this connection to our attention.}
\begin{proposition}
    \label{prop:nerf-spark-bound}
    If $\bV$ is an ETF of $N$ vectors in $\RR^r$, then
    \begin{equation}
        \mathsf{spark}(\bV) \geq N \left(1 + \frac{(N - r)(N - r - 1)}{N - 1}\right)^{-1}.
    \end{equation}
\end{proposition}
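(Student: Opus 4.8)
The plan is to prove the stronger equivalent statement about the Naimark complement and then reduce it to a Welch-type packing bound inside a hyperplane. Writing $m \colonequals N - r$, I would first pass to the Naimark complement, an ETF $\bW$ of $N$ unit vectors $\bw_1, \dots, \bw_N \in \RR^m$ with coherence $\beta$ satisfying $\beta^2 = \frac{r}{(N - r)(N - 1)}$. Since the Naimark complement exchanges $\ker(\bV) = \row(\bV)^\perp$ with $\row(\bW)$, a vector $\bc \in \ker(\bV)$ is exactly one of the form $c_i = \langle \bw_i, \by\rangle$ for some $\by \in \RR^m$, so the defining formula for the spark becomes
\begin{equation}
\mathsf{spark}(\bV) = \min_{\by \in \RR^m \setminus \{\bm 0\}} \left|\left\{ i \in [N] : \langle \bw_i, \by \rangle \neq 0 \right\}\right|.
\end{equation}
Equivalently, if $p$ denotes the largest number of the $\bw_i$ that can be simultaneously orthogonal to a single nonzero $\by$, I want to show that $\mathsf{spark}(\bV) = N - p$ is large, i.e.\ that $p$ is small.

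The geometric crux I would exploit is that for a fixed $\by \neq \bm 0$ the set $P = \{ i : \langle \bw_i, \by\rangle = 0\}$, with $|P| = p$, consists of unit vectors lying in the hyperplane $\by^\perp$, which has dimension $m - 1$; moreover they remain pairwise equiangular with common coherence $\beta$, being a subset of an ETF. Thus their number is controlled by a packing bound \emph{in dimension $m - 1$}. Concretely, setting $\bR \colonequals \sum_{i \in P} \bw_i \bw_i^\top$ we have $\bR \by = \bm 0$, hence $\rank(\bR) \le m - 1$ and
\begin{equation}
p + p(p - 1)\beta^2 = \sum_{i, j \in P} \langle \bw_i, \bw_j\rangle^2 = \|\bR\|_F^2 \geq \frac{(\Tr \bR)^2}{\rank(\bR)} \geq \frac{p^2}{m - 1}.
\end{equation}
This is precisely the Welch bound (\Cref{prop:etf-welch-bound}) applied to the $p$ vectors inside $\by^\perp$, but phrased through the frame potential so that it holds for every $p \geq 1$.

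Finally I would solve this inequality for $p$. Dividing by $p$ gives $1 + (p - 1)\beta^2 \geq \frac{p}{m - 1}$, and since one checks $\beta^2 < \frac{1}{m - 1}$ (equivalently $1 - (m-1)\beta^2 = \frac{(N - 1) + (m - 1)^2}{m(N-1)} > 0$), rearranging and substituting $\beta^2 = \frac{r}{(N - r)(N - 1)}$ yields
\begin{equation}
p \leq \frac{(m - 1)(1 - \beta^2)}{1 - (m - 1)\beta^2} = \frac{N(m - 1)^2}{(N - 1) + (m - 1)^2}.
\end{equation}
Hence $\mathsf{spark}(\bV) = N - p \geq \frac{N(N - 1)}{(N - 1) + (m - 1)^2}$, and since $(m - 1)^2 \le m(m - 1) = (N - r)(N - r - 1)$ this implies the claimed bound. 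The only real subtlety is bookkeeping in the reduction — verifying that coordinates of kernel vectors of $\bV$ are exactly the inner products $\langle \bw_i, \by\rangle$ against the complement frame, and that the vanishing coordinates confine the corresponding $\bw_i$ to the single hyperplane $\by^\perp$ — together with the boundary case $m = 1$, where $\by^\perp = \{\bm 0\}$ forces $p = 0$ and recovers $\mathsf{spark}(\bV) = N$ directly. Phrasing the packing bound through the frame potential $\|\bR\|_F^2 \ge (\Tr \bR)^2 / \rank(\bR)$ is what lets the estimate hold uniformly in $p$, avoiding the usual vacuous regime $p \le m - 1$ of the Welch bound.
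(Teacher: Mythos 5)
Your proof is correct, and it takes a genuinely different route from the paper: the paper does not prove this proposition internally at all, but simply invokes Theorem 5 of the NERF paper \cite{fickus:12:nerfs}. Your argument --- pass to the Naimark complement $\bW$ of $N$ unit vectors in $\RR^m$, $m \colonequals N - r$, with coherence $\beta^2 = \tfrac{r}{(N-r)(N-1)}$; identify kernel vectors of $\bV$ with coordinate vectors $(\la \bw_i, \by \ra)_{i=1}^N$; and bound the number $p$ of the $\bw_i$ that can lie in a single hyperplane $\by^\perp$ via the rank-constrained frame-potential inequality $\|\bR\|_F^2 \geq (\Tr \bR)^2/\rank(\bR)$ applied to $\bR = \sum_{i \in P} \bw_i\bw_i^\top$ --- is sound. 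I verified the algebra, including the sign condition $1 - (m-1)\beta^2 = \frac{(N-1)+(m-1)^2}{m(N-1)} > 0$ that legitimizes solving for $p$, and the $m = 1$ boundary case. Notably, your argument proves something slightly \emph{stronger} than the stated proposition, namely $\mathsf{spark}(\bV) \geq N\bigl(1 + \tfrac{(N-r-1)^2}{N-1}\bigr)^{-1}$, which implies the claim since $(N-r-1)^2 \leq (N-r)(N-r-1)$ (with strict improvement whenever $N \geq r + 2$). The trade-off between the two approaches: the paper's citation is one line and situates the bound within the erasure-robustness literature, whereas your proof is self-contained, uses only elementary tools already present in the paper (equiangularity, the Naimark complement, and the same trace--rank Cauchy--Schwarz mechanism that underlies Welch-type bounds, correctly phrased through the frame potential so as to avoid the vacuous regime $p \leq m - 1$), and yields a marginally sharper constant that would even improve some entries in the paper's Table~\ref{tab:spark-bounds}.
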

\begin{proof}
    This follows directly from Theorem 5 of \cite{fickus:12:nerfs}.
\end{proof}
Our result on spark will sharpen Proposition~\ref{prop:nerf-spark-bound}, so we will be interested in cases where this result is superior to the Gershgorin circle argument.

To this end, consider a scaling regime where $N - r \sim r^{\beta}$.
Then, the NERF spark lower bound scales as $r^{2 - 2\beta}$, while the Gershgorin circle spark lower bound scales as $r^{1 - \beta / 2}$.
These are asymptotically equal at $\beta = \frac{2}{3}$, and thus for any $\beta < \frac{2}{3}$, we expect the NERF bound to be asymptotically superior, and for $\beta > \frac{2}{3}$ expect the Gershgorin circles bound to be asymptotically superior.
Equivalently, we expect the NERF bound to be asymptotically superior on the Naimark complements of ETFs where $N \sim r^\beta$ for $\beta > \frac{3}{2}$.

There are few known infinite families of ETFs where $N$ scales super-linearly with $r$.
One family of examples is given by the maximal ETFs that saturate the Gerzon bound, Proposition~\ref{prop:etf-gerzon-bound}, for which $N \sim r^2$, but it is perhaps the most prominent open problem in the theory of ETFs to determine in what dimensions (in both the real and complex cases) maximal ETFs exist, and it is in particular unknown if infinitely many maximal ETFs exist.
Curiously, to the best of our knowledge all other known families with super-linear scaling in fact scale precisely as $N \sim r^{3/2}$.
(We are aware of such families based on difference sets in finite abelian groups \cite{ding:07}, Steiner systems corresponding to finite affine and projective planes \cite{fickus:12:steiner}, hyperovals in finite projective planes \cite{fickus:16:ovals}, and abelian generalized quadrangles \cite{fickus:17}.)
On these examples, the bound we will present gives an improvement of sub-leading order on the NERF bound.

\subsubsection{Steiner Equiangular Tight Frames}
\label{sec:steiner}

Finally, we present the Steiner ETF construction of \cite{fickus:12:steiner}, on which we will illustrate our results in greatest detail.
This construction is based on combining two types of combinatorial objects, Steiner systems and Hadamard matrices, as defined in Section~\ref{sec:background:comb}
\begin{proposition}[Theorem 1 of \cite{fickus:12:steiner}]
    \label{prop:steiner-etfs}
    Let $(S, \sB)$ be a $(2, k, v)$-Steiner system, let $b = \frac{v(v - 1)}{k(k - 1)}$ be the number of blocks, and let $\rho = \frac{v - 1}{k - 1}$ be the number of blocks containing any point.
    Let $\bH \in \CC^{(1 + \rho) \times (1 + \rho)}$ be a complex Hadamard matrix.
    Then, there exists an ETF of $N = \rho v$ vectors in $\CC^r$ with $r = b$.

    A matrix $\bV \in \CC^{b \times \rho v}$ whose columns are the ETF vectors may be constructed as follows.
    Let $\bN \in \{0, 1\}^{v \times b}$ be the incidence matrix of the Steiner system.
    For each $j = 1, \dots, v$, let $\bV_j \in \CC^{b \times \rho}$ be formed from the $j$th column of $\bN^\top$ by replacing every entry equal to 1 with a distinct row of $\bH$, and every entry equal to 0 with a row of zeros.
    Then, let $\bV \colonequals \rho^{-1/2}[\bV_1 \cdots \bV_v]$.
    If $\bH$ is a real-valued Hadamard matrix, then the resulting ETF is also real.
\end{proposition}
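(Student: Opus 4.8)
The plan is to verify directly that $\bV$ meets the definition of an ETF by computing the Gram matrix $\bG \colonequals \bV^*\bV$ and the frame operator $\bV\bV^*$ entrywise. I index the $N$ columns of $\bV$ by pairs $(j, c)$, where $j \in [v]$ labels the point (equivalently the block $\bV_j$) and $c$ labels the column of $\bH$ assigned to that frame vector, and I index the $b$ rows of $\bV$ by the blocks $B \in \sB$. With this bookkeeping the entry of $\bV$ in row $B$ and column $(j,c)$ is $\rho^{-1/2}H_{\sigma_j(B),c}$ when $j \in B$ and is $0$ otherwise, where for each point $j$ the assignment $\sigma_j$ sends the $\rho$ blocks through $j$ injectively to $\rho$ of the $1+\rho$ rows of $\bH$, omitting exactly one row. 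This injectivity is precisely the ``distinct rows'' requirement in the construction, and it is the hinge on which two of the arguments below turn.

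First I would dispatch the two easier conditions. Unit norm is immediate: each column $(j,c)$ has exactly $\rho$ nonzero entries, each of modulus $\rho^{-1/2}$, so its squared norm is $1$. For tightness I would show $\bV\bV^* = \tfrac{N}{r}\bm I_b$ entrywise. The diagonal entry indexed by a block $B$ sums $|\rho^{-1/2}H_{\sigma_j(B),c}|^2$ over the $k$ points $j \in B$ and all columns $c$, collapsing to a constant independent of $B$. An off-diagonal entry indexed by distinct blocks $B, B'$ receives contributions only from points $j \in B \cap B'$, and for each such $j$ the inner sum over $c$ is a row inner product of $\bH$ between rows $\sigma_j(B)$ and $\sigma_j(B')$; since $B \neq B'$ forces $\sigma_j(B) \neq \sigma_j(B')$ by injectivity, the relation $\bH\bH^* = (1+\rho)\bm I$ annihilates each such term. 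Hence $\bV\bV^*$ is a scalar multiple of the identity.

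The substance of the proof is equiangularity, which I would obtain by splitting the off-diagonal entries of $\bG$ into two cases and checking that both yield modulus $\rho^{-1}$. For a same-point pair $(j,c), (j,c')$ with $c \neq c'$, the entry equals $\rho^{-1}$ times a sum of products $\overline{H_{\tau c}}H_{\tau c'}$ over the $\rho$ rows $\tau$ used by $\sigma_j$; because the columns of $\bH$ are orthogonal, the full sum over all $1+\rho$ rows vanishes, so this partial sum equals the negative of the single omitted term, of modulus $1$. For a different-point pair $(j,c),(j',c')$ with $j \neq j'$, the defining property of a $(2,k,v)$-Steiner system guarantees that $j$ and $j'$ lie together in a \emph{unique} common block $B^*$, so only $B = B^*$ survives, again leaving a single product of modulus $1$. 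In both cases the Gram entry has modulus $\rho^{-1}$, giving coherence $\alpha = \rho^{-1}$.

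The main obstacle is exactly that these two structurally unrelated computations must agree. The same-point case is governed by the Hadamard matrix (partial orthogonality of its columns, together with $|H_{ij}| = 1$), whereas the different-point case is governed by the combinatorics of the design (the $t = 2$ incidence condition). That both collapse to a single modulus-$1$ entry divided by $\rho$, and hence produce a common off-diagonal magnitude, is the coincidence that makes the output an ETF rather than merely a unit norm tight frame; confirming it is where I would spend the most care. Finally, the assertion that a real Hadamard matrix yields a real ETF is immediate, since then every entry of $\bV$ is real.
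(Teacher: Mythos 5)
Your proof is correct, and there is nothing in the paper itself to compare it against: the paper imports this proposition without proof, citing Theorem 1 of the Steiner ETF reference \cite{fickus:12:steiner}, and your direct verification (unit norm from the $\rho$ unimodular entries per column; tightness from row orthogonality of $\bH$ together with injectivity of the block-to-row assignment; equiangularity via the two-case split, where same-point pairs reduce by column orthogonality to a single omitted-row term and different-point pairs reduce by the $t = 2$ design property to the unique common block) is essentially the standard argument given there. One point you should state explicitly rather than leave implicit: the proposition as printed contains a typo. Since each row of $\bH$ has length $1 + \rho$, each $\bV_j$ is $b \times (1 + \rho)$ and the construction produces $N = v(1 + \rho)$ vectors, not $N = \rho v$; the paper itself uses $N = v(1+\rho)$ in its later analysis of Steiner ETFs, and your indexing of columns by pairs $(j, c)$ with $c$ ranging over all $1 + \rho$ columns of $\bH$ silently adopts the corrected count. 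This correction is not cosmetic but essential to your own argument: if one instead kept only $\rho$ columns per point so as to realize $N = \rho v$, unit norm and equiangularity with coherence $\rho^{-1}$ would survive, but tightness would fail, since the off-diagonal entries of $\bV\bV^*$ would become \emph{partial} row inner products of $\bH$ (over the retained columns), which need not vanish --- consistent with the fact that coherence $\rho^{-1}$ violates the Welch bound at the parameters $(\rho v, b)$ but meets it exactly at $(v(1+\rho), b)$.
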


\section{Projecting to the Perturbation Subspace}

Returning to the more general setting of the introduction, let $\bv_1, \dots, \bv_N \in \CC^r$ form a UNTF, let $\bV \in \CC^{r \times N}$ have the $\bv_i$ as its columns, and let $\bX = \bV^* \bV$ be the Gram matrix.
In this section, we compute the orthogonal projection operator to $\pert_{\widetilde{\fE}^N}(\bX)$,
\begin{equation}
    \sP\bA \colonequals \argmin_{\bB \in \pert_{\widetilde{\fE}^N}(\bX)} \frac{1}{2}\|\bA - \bB \|_F^2.
    \label{eq:proj-var}
\end{equation}
The key tool is the following classical result describing perturbation subspaces for the elliptope.
\begin{proposition}[Theorem 1(a) of \cite{li:94}]
    \label{prop:li-tam}
    Let $\bX \in \widetilde{\fE}^N$ with $\rank(\bX) = r$, and let $\bv_1, \dots, \bv_N \in \CC^r$ such that $X_{ij} = \la \bv_i, \bv_j \ra$ for $i, j \in [N]$.
    Write $\bV \in \CC^{r \times N}$ for the matrix with the $\bv_i$ as its columns.
    Then,
    \begin{equation}
      \mathsf{pert}_{\widetilde{\fE}^N}(\bX) = \left\{ \bV^* \bH \bV: \bH \in \CC_\herm^{r \times r}, \bv_i^* \bH \bv_i = 0 \text{ for all } i \in [N] \right\}.
  \end{equation}
\end{proposition}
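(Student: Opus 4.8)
The plan is to unwind the definition of $\pert_{\widetilde{\fE}^N}(\bX)$ into the two constraints that cut out $\widetilde{\fE}^N$ — the affine diagonal constraint and the psd constraint — and to analyze each separately. A Hermitian direction $\bA$ lies in $\pert_{\widetilde{\fE}^N}(\bX)$ precisely when there is some $t > 0$ with both $\bX \pm t\bA \succeq \bm 0$ and $\diag(\bX \pm t\bA) = \one$. Since $\diag(\bX) = \one$, the diagonal condition holds for both signs exactly when $\diag(\bA) = \bm 0$; and since the $(i,i)$ entry of $\bV^*\bH\bV$ is $\bv_i^*\bH\bv_i$, this linear constraint will match the condition $\bv_i^*\bH\bv_i = 0$ once the factorization $\bA = \bV^*\bH\bV$ is established. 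So the real content is to show that, modulo the diagonal constraint, membership in the perturbation subspace is equivalent to factoring through $\bV$.

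For the inclusion $\supseteq$, I would take $\bA = \bV^*\bH\bV$ with $\bv_i^*\bH\bv_i = 0$ and simply write $\bX \pm t\bA = \bV^*(\bm I_r \pm t\bH)\bV$, using $\bX = \bV^*\bV$ (which holds because $X_{ij} = \la \bv_i, \bv_j\ra = (\bV^*\bV)_{ij}$). Since $\bH$ is a fixed Hermitian matrix, $\bm I_r \pm t\bH \succeq \bm 0$ for all sufficiently small $t > 0$, whence $\bV^*(\bm I_r \pm t\bH)\bV \succeq \bm 0$, while the diagonal constraint holds by the computation above; thus $\bA \in \pert_{\widetilde{\fE}^N}(\bX)$. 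This is the easy direction.

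For the inclusion $\subseteq$, the key step is to promote the psd constraint into a statement about kernels. Note first that $\ker(\bX) = \ker(\bV)$, since $\bV^*\bV\bz = \bm 0$ forces $\|\bV\bz\|_2^2 = \bz^*\bV^*\bV\bz = 0$. Fixing $t > 0$ with $\bX \pm t\bA \succeq \bm 0$ and taking any $\bz \in \ker(\bX)$, testing the quadratic form for both signs gives $\pm t\,\bz^*\bA\bz \geq 0$, hence $\bz^*\bA\bz = 0$, and therefore $\bz^*(\bX + t\bA)\bz = 0$. Here I invoke the standard fact that a psd matrix $\bM$ with $\bz^*\bM\bz = 0$ annihilates $\bz$ (factor $\bM = \bW^*\bW$ to see $\bW\bz = \bm 0$): applied to $\bM = \bX + t\bA$ this gives $(\bX + t\bA)\bz = \bm 0$, and since $\bX\bz = \bm 0$ we conclude $\bA\bz = \bm 0$. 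Thus $\ker(\bV) \subseteq \ker(\bA)$.

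Finally I would record the elementary lemma that, for $\bV$ of full row rank $r$ (guaranteed by $\rank(\bX) = r$), a Hermitian matrix $\bA$ satisfies $\ker(\bV) \subseteq \ker(\bA)$ if and only if $\bA = \bV^*\bH\bV$ for some Hermitian $\bH$. One direction is immediate; for the other, set $\bH \colonequals (\bV\bV^*)^{-1}\bV\bA\bV^*(\bV\bV^*)^{-1}$ (Hermitian since $\bA$ is, with $\bV\bV^*$ invertible) and check that $\bV^*\bH\bV = \bP\bA\bP$, where $\bP = \bV^*(\bV\bV^*)^{-1}\bV$ is the orthogonal projection onto $\ker(\bV)^\perp$; the containment $\ker(\bV)\subseteq\ker(\bA)$ gives $\bA\bP = \bA$, and Hermiticity then gives $\bP\bA = \bA$, so $\bP\bA\bP = \bA$. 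The diagonal constraint $\diag(\bA) = \bm 0$ now reads $\bv_i^*\bH\bv_i = 0$, completing the identification. I expect the main obstacle to be the $\subseteq$ direction, and specifically the upgrade from vanishing of the scalar $\bz^*\bA\bz$ to the vector identity $\bA\bz = \bm 0$ — this is exactly where the one-sidedness of the psd constraint (as opposed to a mere linear equality) enters — together with the verification of the factorization lemma; the $\supseteq$ direction is routine.
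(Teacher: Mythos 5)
Your proof is correct, but there is nothing in the paper to compare it against: the paper does not prove this proposition, it imports it wholesale as Theorem 1(a) of Li and Tam \cite{li:94} and uses it as a black box in the proof of Lemma~\ref{lem:proj}. What you have written is therefore a self-contained proof of the cited result, and it checks out in every step. The direction $\supseteq$ via $\bX \pm t\bA = \bV^*(\bm I_r \pm t\bH)\bV$ is routine, as you say. The substance is in $\subseteq$, and your chain of reductions is sound: $\rank(\bX) = r$ forces $\bV$ to have full row rank (so $\bV\bV^*$ is invertible and $\ker(\bX) = \ker(\bV)$); testing the quadratic form of $\bX \pm t\bA$ on $\ker(\bX)$ and using the fact that a psd matrix annihilates any vector on which its quadratic form vanishes yields $\ker(\bV) \subseteq \ker(\bA)$; and the explicit choice $\bH = (\bV\bV^*)^{-1}\bV\bA\bV^*(\bV\bV^*)^{-1}$ together with the identity $\bV^*\bH\bV = \bP\bA\bP = \bA$ (where $\bP$ is the projector onto $\ker(\bV)^\perp$, and $\bA\bP = \bA$ follows from the kernel containment, $\bP\bA = \bA$ from Hermiticity) completes the factorization; the diagonal constraint $\diag(\bA) = \bm 0$ then translates to $\bv_i^*\bH\bv_i = 0$ exactly as needed. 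This kernel-preservation argument is essentially the facial-structure reasoning underlying Li and Tam's original theorem, so your route is the natural one; the only cosmetic remark is that in the $\supseteq$ direction you should note the trivial case $\bH = \bm 0$ (or fix $t \leq 1/\|\bH\|$ explicitly), which does not affect correctness.
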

Our formula is then the following (retaining the notation from Proposition~\ref{prop:li-tam}).

\begin{lemma}
    \label{lem:proj}
    Let $\bv_1, \dots, \bv_N \in \CC^r$ form a UNTF.
    Suppose the $\bv_i\bv_i^*$ are linearly independent, or equivalently that $|\bX|^{\odot 2}$ is non-singular.
    Let $\bx_1, \dots, \bx_N$ be the columns of $\bX$.
    Then,
    \begin{equation}
        \sP\bA = \frac{r^2}{N^2}\Bigg(\bX\bA \bX - \sum_{i, j = 1}^N (|\bX|^{\odot 2})^{-1}_{ij} (\bx_i^* \bA \bx_i) \bx_j\bx_j^*\Bigg).
        \label{eq:proj-formula}
    \end{equation}
\end{lemma}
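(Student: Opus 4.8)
The plan is to verify that the right-hand side of \eqref{eq:proj-formula} satisfies the two defining properties of the orthogonal projection onto $\pert_{\widetilde{\fE}^N}(\bX)$ determined by the variational problem \eqref{eq:proj-var}: namely, that (i) $\sP\bA$ lies in this subspace, and (ii) the residual $\bA - \sP\bA$ is Frobenius-orthogonal to it. Since these two properties characterize the minimizer uniquely, establishing them for the given formula finishes the proof. Throughout I would use the UNTF identity $\bV\bV^* = \frac{N}{r}\bm I_r$ (condition (1) of the UNTF definition), which yields the reductions $\bX\bV^* = \frac{N}{r}\bV^*$, $\bV\bX = \frac{N}{r}\bV$, and $\bV\bx_j = \frac{N}{r}\bv_j$, together with the identities $\bX = \bV^*\bV$, $\bV^*\bv_m = \bx_m$, and $\bv_m^*\bv_j = X_{mj}$.

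For membership (property (i)), I would first rewrite $\bX\bA\bX = \bV^*(\bV\bA\bV^*)\bV$ and $\bx_j\bx_j^* = \bV^*(\bv_j\bv_j^*)\bV$, so that $\sP\bA = \bV^*\bH\bV$ with
\[
\bH = \frac{r^2}{N^2}\left(\bV\bA\bV^* - \sum_{i,j=1}^N (|\bX|^{\odot 2})^{-1}_{ij}\,(\bx_i^*\bA\bx_i)\,\bv_j\bv_j^*\right),
\]
which is Hermitian because $\bA$ is Hermitian, $(|\bX|^{\odot 2})^{-1}$ is real symmetric, and each scalar $\bx_i^*\bA\bx_i$ is real. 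By Proposition~\ref{prop:li-tam} it then remains only to check the constraints $\bv_m^*\bH\bv_m = 0$ for each $m$. Substituting $\bV^*\bv_m = \bx_m$ and $\bv_m^*\bv_j\bv_j^*\bv_m = |X_{mj}|^2 = (|\bX|^{\odot 2})_{mj}$, the constraint reduces to $\bx_m^*\bA\bx_m = \sum_{i,j}(|\bX|^{\odot 2})^{-1}_{ij}(\bx_i^*\bA\bx_i)(|\bX|^{\odot 2})_{jm}$, and carrying out the sum over $j$ using $(|\bX|^{\odot 2})^{-1}(|\bX|^{\odot 2}) = \bm I_N$ collapses the right-hand side to $\bx_m^*\bA\bx_m$. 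This is precisely the step where the non-singularity hypothesis on $|\bX|^{\odot 2}$ enters.

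For orthogonality of the residual (property (ii)), it suffices to show $\la\bA,\bV^*\bH\bV\ra_F = \la\sP\bA,\bV^*\bH\bV\ra_F$ for every Hermitian $\bH$ with $\bv_i^*\bH\bv_i = 0$. Writing $\la\bA,\bV^*\bH\bV\ra_F = \Tr(\bV\bA\bV^*\bH)$ and applying the reductions above through cyclicity of the trace, I would compute $\la\bX\bA\bX,\bV^*\bH\bV\ra_F = \frac{N^2}{r^2}\Tr(\bV\bA\bV^*\bH)$ and, for each correction term, $\la\bx_j\bx_j^*,\bV^*\bH\bV\ra_F = \frac{N^2}{r^2}\,\bv_j^*\bH\bv_j = 0$, the final equality being exactly the constraint defining $\pert_{\widetilde{\fE}^N}(\bX)$. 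Combining these gives $\la\sP\bA,\bV^*\bH\bV\ra_F = \Tr(\bV\bA\bV^*\bH) = \la\bA,\bV^*\bH\bV\ra_F$, as required.

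Conceptually, the two terms of \eqref{eq:proj-formula} play complementary roles: the leading term $\frac{r^2}{N^2}\bX\bA\bX$ already has a residual orthogonal to the subspace, while every correction matrix $\bx_j\bx_j^*$ lies in the orthogonal complement $\pert_{\widetilde{\fE}^N}(\bX)^{\perp}$, so subtracting the correction repairs the diagonal constraints (pushing the output into the subspace) without spoiling orthogonality of the residual. I expect the main obstacle to be essentially bookkeeping: propagating the two-sided $\bV$/$\bV^*$ multiplications through cyclic traces and keeping the UNTF scalings $\frac{N}{r}$ consistent. The one genuinely substantive point, as opposed to routine computation, is recognizing that the coefficients $(|\bX|^{\odot 2})^{-1}_{ij}$ are forced by the requirement that the diagonal constraints hold, and that their existence is guaranteed exactly by the linear independence of the $\bv_i\bv_i^*$.
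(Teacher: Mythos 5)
Your proof is correct, but it takes a genuinely different route from the paper's. The paper \emph{derives} the formula by solving the constrained least-squares problem head-on: after using Proposition~\ref{prop:li-tam} to rewrite the minimization over $\bH \in \CC^{r \times r}_{\herm}$ subject to $\bv_i^*\bH\bv_i = 0$, it expands the objective with the UNTF identity, forms a Lagrangian with multipliers $\bm\gamma \in \RR^N$, reads off the first-order condition $\bH^\star(\bA) \propto \bV\bA\bV^* + \sum_i \gamma_i \bv_i\bv_i^*$, and observes that imposing the constraints forces $\bm\gamma(\bA)$ to solve a linear system whose matrix is exactly $|\bX|^{\odot 2}$; invertibility yields $\gamma_i(\bA) = -\sum_j (|\bX|^{\odot 2})^{-1}_{ij}\,\bx_j^*\bA\bx_j$, and back-substitution gives \eqref{eq:proj-formula}. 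You instead take the formula as given and verify the two Hilbert-space axioms of orthogonal projection onto the finite-dimensional (hence closed) real subspace $\pert_{\widetilde{\fE}^N}(\bX)$: membership, via Proposition~\ref{prop:li-tam} together with the collapse $\sum_j (|\bX|^{\odot 2})^{-1}_{ij}(|\bX|^{\odot 2})_{jm} = \delta_{im}$, and orthogonality of the residual, via $\bV\bx_j = \frac{N}{r}\bv_j$ and the constraint $\bv_j^*\bH\bv_j = 0$. The algebra that appears is essentially the same in both arguments (your constraint check is the paper's linear system read backwards), but the logical scaffolding differs, and each has its advantage: the Lagrangian derivation explains where the coefficients $(|\bX|^{\odot 2})^{-1}_{ij}$ come from and would let one \emph{discover} the formula rather than merely confirm it, while your verification is more elementary---it requires no appeal to first-order optimality or duality---and isolates the clean geometric picture that each $\bx_j\bx_j^*$ lies in $\pert_{\widetilde{\fE}^N}(\bX)^{\perp}$ while the leading term $\frac{r^2}{N^2}\bX\bA\bX$ already leaves a residual orthogonal to the subspace, so the correction term exactly repairs membership without disturbing orthogonality.
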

(In particular, all ETFs except trivial ones with $r = 1$ satisfy the hypotheses.)

\begin{proof}
By Proposition~\ref{prop:li-tam} and the variational characterization \eqref{eq:proj-var}, we have
\begin{align}
  \sP\bA &= \bV^* \bH^\star(\bA) \bV, \label{eq:P-formula-inter} \\
  \bH^\star(\bA) &\colonequals \argmin_{\substack{\bH \in \CC_{\herm}^{r \times r} \\ \bv_i^*\bH\bv_i = 0 \text{ for } i \in [N]}} \mathsf{obj}(\bH; \bA), \\
  \mathsf{obj}(\bH; \bA) &\colonequals \frac{1}{2}\|\bA - \bV^* \bH \bV\|_F^2 \nonumber \\
  &= \frac{1}{2}\|\bA\|_F^2 + \frac{N^2}{2r^2}\|\bH\|_F^2 - \la \bV\bA\bV^*, \bH \ra.
\end{align}
(In the final equation we use the UNTF property.)
Introducing a vector of Lagrange multipliers $\bm\gamma \in \RR^N$ for the constraints in the optimization defining $\bH^\star$, we obtain the Lagrangian
\begin{align}
  L(\bH, \bm\gamma; \bA)
  &\colonequals \mathsf{obj}(\bH; \bA) - \sum_{i = 1}^N \gamma_i \bv_i^*\bH\bv_i \nonumber \\
  &= \frac{1}{2}\|\bA\|_F^2 + \frac{N^2}{2r^2}\|\bH\|_F^2 - \left\la \bV\bA\bV^* + \sum_{i = 1}^N \gamma_i\bv_i\bv_i^*, \bH \right\ra.
\end{align}
The first-order condition for optimality then implies that
\begin{equation}
    \bH^\star(\bA) = \bV\bA\bV^* + \sum_{i = 1}^N \gamma_i(\bA) \bv_i\bv_i^*
    \label{eq:h-star-formula}
\end{equation}
for some $\bm\gamma(\bA)$ such that $\bv_i^* \bH^\star(\bA) \bv_i = 0$ for $i \in [N]$.
These constraints may be written as the system
\begin{equation}
    \sum_{j = 1}^N |\la \bv_i, \bv_j\ra|^2\gamma_j(\bA) = -\bv_i^*\bV\bA\bV^*\bv_i \text{ for } i \in [N],
\end{equation}
which is a linear system in $\bm\gamma(\bA)$ with matrix $|\bX|^{\odot 2}$.
Since this matrix is invertible by assumption, there is a unique solution
\begin{equation}
    \gamma_i(\bA) = -\sum_{j = 1}^N (|\bX|^{\odot 2})^{-1}_{ij}\bv_j^*\bV\bA\bV^*\bv_j.
\end{equation}
The result follows by substituting into \eqref{eq:h-star-formula} and then \eqref{eq:P-formula-inter}.
\end{proof}

The result we will use to obtain sparsity inequalities follows from manipulations of the fact that $\sP$ is psd, whereby $\la \bA, \sP\bA \ra \geq 0$ for any $\bA \in \CC^{N \times N}_{\herm}$.
Surprisingly, this fact is equivalent to the following simpler matrix inequality.
\begin{lemma}
    \label{lem:ineq}
    Let $\bv_1, \dots, \bv_N \in \CC^r$ form a UNTF.
    Suppose the $\bv_i\bv_i^*$ are linearly independent, or equivalently that $|\bX|^{\odot 2}$ is non-singular.
    Then,
    \begin{equation}
        (|\bV|^{\odot 2})(|\bX|^{\odot 2})^{-1}(|\bV|^{\odot 2})^\top \preceq \bm I_r,
        \label{eq:untf-overlap-ineq}
    \end{equation}
    or equivalently
    \begin{equation}
        (|\bV|^{\odot 2})^\top(|\bV|^{\odot 2}) \preceq |\bX|^{\odot 2}.
        \label{eq:untf-overlap-ineq-2}
    \end{equation}
\end{lemma}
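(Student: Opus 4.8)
The plan is to read the inequality straight off the fact that, by its variational definition \eqref{eq:proj-var}, $\sP$ is an \emph{orthogonal projection} and hence positive semidefinite: $\la \bA, \sP\bA\ra_F = \|\sP\bA\|_F^2 \geq 0$ for every $\bA \in \CC^{N\times N}_{\herm}$. All of the content will come from expanding this nonnegativity using the closed form \eqref{eq:proj-formula} and then evaluating it on a well-chosen family of test matrices.

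First I would substitute \eqref{eq:proj-formula} into $\la \bA, \sP\bA\ra_F = \Tr(\bA\,\sP\bA)$. Since $\Tr(\bA\,\bx_j\bx_j^*) = \bx_j^*\bA\bx_j$, this yields
\[ 0 \;\leq\; \frac{N^2}{r^2}\,\la \bA, \sP\bA\ra_F \;=\; \Tr(\bA\bX\bA\bX) - \sum_{i,j = 1}^N (|\bX|^{\odot 2})^{-1}_{ij}\,(\bx_i^*\bA\bx_i)(\bx_j^*\bA\bx_j). \]
Writing $\bX = \bV^*\bV$, so that $\bx_i = \bV^*\bv_i$, and setting $\bB \colonequals \bV\bA\bV^* \in \CC^{r\times r}_{\herm}$, the cross terms simplify to the real scalars $\bx_i^*\bA\bx_i = \bv_i^*\bB\bv_i$, while cyclicity of the trace collapses the first term to $\Tr(\bB^2) = \|\bB\|_F^2$. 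Because $\bV$ has full row rank $r$, the map $\bA \mapsto \bV\bA\bV^*$ is surjective onto $\CC^{r\times r}_{\herm}$ (one may realize any target $\bB$ via the pseudoinverse of $\bV$), so I obtain, for \emph{every} Hermitian $\bB$,
\[ \sum_{i,j = 1}^N (|\bX|^{\odot 2})^{-1}_{ij}\,(\bv_i^*\bB\bv_i)(\bv_j^*\bB\bv_j) \;\leq\; \|\bB\|_F^2. \]

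The key step is the choice of test matrices. Taking $\bB = \diag(\bw)$ for arbitrary $\bw \in \RR^r$ gives $\bv_i^*\bB\bv_i = \sum_{a = 1}^r |V_{ai}|^2 w_a = \big((|\bV|^{\odot 2})^\top \bw\big)_i$ and $\|\bB\|_F^2 = \|\bw\|_2^2$. Abbreviating $\bM \colonequals |\bV|^{\odot 2}$ and $\bG \colonequals |\bX|^{\odot 2}$ (real, symmetric, and invertible by hypothesis, hence positive definite), the displayed inequality becomes $\bw^\top \bM\bG^{-1}\bM^\top \bw \leq \bw^\top\bw$ for all $\bw$, i.e.\ $\bM\bG^{-1}\bM^\top \preceq \bm I_r$, which is exactly \eqref{eq:untf-overlap-ineq}. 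The equivalence with \eqref{eq:untf-overlap-ineq-2} then follows from the standard fact that $\bC\bC^\top \preceq \bm I_r$ iff $\bC^\top\bC \preceq \bm I_N$ (both assert $\|\bC\|_{\op} \leq 1$), applied to $\bC \colonequals \bM\bG^{-1/2}$ after conjugating \eqref{eq:untf-overlap-ineq-2} by $\bG^{-1/2}$.

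The one genuinely clever move — and the step I would think about hardest — is the reparametrization $\bA \mapsto \bB = \bV\bA\bV^*$ followed by the specialization to diagonal $\bB$: this is precisely what converts the opaque $N \times N$ expression into the $r \times r$ Hadamard-square structure $|\bV|^{\odot 2}$ and $|\bX|^{\odot 2}$, and it is not obvious a priori that the whole family of constraints $\la\bA,\sP\bA\ra_F \geq 0$ should condense into a single clean matrix inequality. I would double-check that restricting to diagonal $\bB$ is legitimate for our purposes (it only needs to \emph{imply} \eqref{eq:untf-overlap-ineq}, which it does, even though the unrestricted nonnegativity is a formally stronger family), and that $\bv_i^*\bB\bv_i$ together with the entries of $(|\bX|^{\odot2})^{-1}$ are real so that the final quadratic form is meaningful. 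Everything else is routine trace manipulation.
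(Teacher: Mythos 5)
Your proposal is correct and follows essentially the same route as the paper's proof: both start from the positive semidefiniteness of the projection $\sP$, use the surjectivity of $\bA \mapsto \bV\bA\bV^*$ onto $\CC^{r\times r}_{\herm}$ (via the UNTF identity $\bV\bV^* = \frac{N}{r}\bm I_r$) to recast $\la \bA, \sP\bA\ra_F \geq 0$ as the inequality $\sum_{i,j}(|\bX|^{\odot 2})^{-1}_{ij}(\bv_i^*\bB\bv_i)(\bv_j^*\bB\bv_j) \leq \|\bB\|_F^2$ over Hermitian $\bB$, and then specialize to real diagonal $\bB$ to read off \eqref{eq:untf-overlap-ineq} as a quadratic-form inequality. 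The only (immaterial) difference is that the paper phrases the diagonal reduction via unitary invariance of the UNTF property to get a full equivalence, whereas you simply restrict to diagonal test matrices, which suffices for the implication the lemma needs.
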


\begin{remark}
    After obtaining Lemma~\ref{lem:ineq}, we discovered the reference \cite{wang:97}, which gives a general result of the form \eqref{eq:untf-overlap-ineq-2} not depending on the $\bv_i$ forming a tight frame.
    The proof of \cite{wang:97} builds a psd block matrix through the Schur product theorem and uses that its Schur complement remains psd.
    Our proof gives a more direct geometric argument, and, as we discuss in Section~\ref{sec:problems}, may generalize to higher-degree inequalities.
\end{remark}

\begin{proof}
    Let us write
    \begin{align}
      \sP\bA &\colonequals \sP_1\bA - \sP_2\bA, \\
      \sP_1\bA &\colonequals \frac{r^2}{N^2}\bX \bA \bX, \\
      \sP_2\bA &\colonequals \frac{r^2}{N^2}\sum_{i, j = 1}^N (|\bX|^{\odot 2})^{-1}_{ij} (\bx_i^* \bA \bx_i) \bx_j\bx_j^*.
    \end{align}
    Writing $\what{\bV} \colonequals \sqrt{\frac{r}{N}}\bV$, we have
    \begin{align}
      \la \bA, \sP_1\bA \ra &= \left\|\what{\bV}\bA\what{\bV}^* \right\|_F^2, \\
      \la \bA, \sP_2\bA \ra &= \sum_{i,j = 1}^N (|\bX|^{\odot 2})^{-1}_{ij} \left(\bv_i^*\what{\bV} \bA \what{\bV}^* \bv_i\right)\left(\bv_j^* \what{\bV} \bA \what{\bV}^* \bv_j\right).
    \end{align}
Any $\bH \in \CC^{r \times r}_{\herm}$ may be expressed in the form $\frac{r}{N}\what{\bV}\bA\what{\bV}^*$ by taking $\bA = \what{\bV}^*\bH\what{\bV}$.
Therefore, the inequality $\la \bA, \sP \bA \ra = \la \bA, \sP_1 \bA \ra - \la \bA, \sP_2 \bA \ra \geq 0$ holding for $\bA \in \CC^{N \times N}_{\herm}$ is equivalent to the following inequality holding for $\bH \in \CC^{r \times r}_{\herm}$:
\begin{equation}
    \sum_{i = 1}^N \sum_{j = 1}^N (|\bX|^{\odot 2})^{-1}_{ij} \left(\bv_i^* \bH \bv_i\right)\left(\bv_j^* \bH \bv_j\right) \leq \|\bH\|_F^2.
    \label{eq:ineq-H}
\end{equation}
Moreover, since applying a unitary transformation to a UNTF produces another UNTF having the same Gram matrix $\bX$, we may assume that $\bH$ is diagonal and real-valued, $\bH = \diag(\bm\lambda)$ with $\bm\lambda \in \RR^r$.
Rewriting the resulting inequality as an inequality of quadratic forms in $\bm\lambda$ then gives the result.
The equivalence of \eqref{eq:untf-overlap-ineq} and \eqref{eq:untf-overlap-ineq-2} is a general fact; see e.g.\ \cite{baksalary:91}.
\end{proof}

\section{Sparsity Inequalities for ETFs}

Our remaining results take advantage of the fact that the matrix $|\bX|^{\odot 2}$ is very simple for an ETF:
\begin{equation}
    |\bX|^{\odot 2} = (1 - \alpha^2)\bm I_r + \alpha^2 \one\one^\top.
    \label{eq:x-sq-etf}
\end{equation}
Moreover, by the Welch bound (our Proposition~\ref{prop:etf-welch-bound}), $\alpha$ depends only on the dimension parameters $N$ and $r$:
\begin{equation}
    \alpha = \sqrt{\frac{N - r}{r(N - 1)}}.
    \label{eq:alpha-etf}
\end{equation}
In this case, Lemma~\ref{lem:ineq} gives the following.

\begin{theorem}
    \label{thm:etf-ineq}
    Let $\bv_1, \dots, \bv_N \in \CC^r$ form an ETF.
    Define $\bR \colonequals (|\bV|^{\odot 2})(|\bV|^{\odot 2})^\top \in \RR^{r \times r}$, with entries $R_{k \ell} = \sum_{i = 1}^N |(\bv_i)_k|^2|(\bv_i)_\ell|^2$.
    Then,
    \begin{equation}
        \bm R \preceq \frac{1 - \frac{1}{r}}{1 - \frac{1}{N}}\bm I_r + \frac{\frac{N}{r} - 1}{r(1 - \frac{1}{N})}\one\one^\top.
        \label{eq:etf-ineq}
    \end{equation}
\end{theorem}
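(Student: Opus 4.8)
The plan is to specialize the first inequality of Lemma~\ref{lem:ineq}, namely \eqref{eq:untf-overlap-ineq}, to the ETF setting by exploiting the explicit ``scaled identity plus rank-one'' structure of $|\bX|^{\odot 2}$ recorded in \eqref{eq:x-sq-etf}, i.e.\ $|\bX|^{\odot 2} = (1-\alpha^2)\bm I_N + \alpha^2\one\one^\top$. Writing $\bW \colonequals |\bV|^{\odot 2} \in \RR^{r \times N}$, inequality \eqref{eq:untf-overlap-ineq} reads $\bW(|\bX|^{\odot 2})^{-1}\bW^\top \preceq \bm I_r$, and by definition $\bW\bW^\top = \bR$. (The hypotheses of Lemma~\ref{lem:ineq} hold for any nontrivial ETF, as already noted after Lemma~\ref{lem:proj}.) The whole proof is thus a computation of this left-hand side in terms of $\bR$.

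First I would invert $|\bX|^{\odot 2}$ using the Sherman--Morrison formula, which gives $(|\bX|^{\odot 2})^{-1} = \frac{1}{1-\alpha^2}\bm I_N - c\,\one\one^\top$ for the explicit scalar $c = \frac{\alpha^2}{(1-\alpha^2)(1 + (N-1)\alpha^2)}$. The two scalars simplify nicely once the Welch value \eqref{eq:alpha-etf} is inserted: one checks $1 + (N-1)\alpha^2 = \frac{N}{r}$ and $1 - \alpha^2 = \frac{1 - 1/r}{1 - 1/N}$. Conjugating by $\bW$ then splits the left-hand side into two pieces, the scaled-identity piece contributing $\frac{1}{1-\alpha^2}\bR$ and the rank-one piece contributing $-c\,(\bW\one)(\bW\one)^\top$.

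The key observation is that $\bW\one$ is a constant vector. Its $k$th entry is $\sum_{i=1}^N |(\bv_i)_k|^2$, which is exactly the $(k,k)$ diagonal entry of $\sum_{i=1}^N \bv_i\bv_i^* = \frac{N}{r}\bm I_r$ by the tight-frame condition; hence $\bW\one = \frac{N}{r}\one$ and $(\bW\one)(\bW\one)^\top = \frac{N^2}{r^2}\one\one^\top$ as an $r \times r$ matrix. Substituting, the inequality becomes $\frac{1}{1-\alpha^2}\bR - c\,\frac{N^2}{r^2}\one\one^\top \preceq \bm I_r$; multiplying through by $1-\alpha^2 > 0$ and moving the rank-one term to the right isolates $\bR$ and produces \eqref{eq:etf-ineq}.

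No step is a genuine obstacle: the argument is a direct substitution into \eqref{eq:untf-overlap-ineq}, and the only care required is the bookkeeping of the two scalar coefficients. The simplifications $1 + (N-1)\alpha^2 = \frac{N}{r}$ and $1 - \alpha^2 = \frac{1-1/r}{1-1/N}$ are what make the final coefficients collapse to the stated form; the one spot where a stray factor could creep in is the last verification that $c\cdot\frac{N^2}{r^2}(1-\alpha^2)$ equals $\alpha^2\frac{N}{r} = \frac{N/r - 1}{r(1 - 1/N)}$, which is the coefficient of $\one\one^\top$ in \eqref{eq:etf-ineq}.
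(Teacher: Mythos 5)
Your proposal is correct and follows essentially the same route as the paper's proof: both specialize \eqref{eq:untf-overlap-ineq} to the ETF case, invert the identity-plus-rank-one matrix $|\bX|^{\odot 2}$ (the paper states the inverse directly, you derive it via Sherman--Morrison), use the tight-frame identity $(|\bV|^{\odot 2})\one = \frac{N}{r}\one$ to handle the rank-one term, and rearrange. Your scalar simplifications ($1+(N-1)\alpha^2 = \frac{N}{r}$, $1-\alpha^2 = \frac{1-1/r}{1-1/N}$, and $c\cdot\frac{N^2}{r^2}(1-\alpha^2) = \frac{N/r-1}{r(1-1/N)}$) all check out.
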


\begin{proof}
    From \eqref{eq:x-sq-etf} and \eqref{eq:alpha-etf}, we find
    \begin{equation}
        (|\bX|^{\odot 2})^{-1} = \frac{1 - \frac{1}{N}}{1 - \frac{1}{r}}\left(\bm I_N - \frac{N - r}{N(N - 1)}\one\one^\top\right).
    \end{equation}
    We have
    \begin{equation}
        (|\bV|^{\odot 2})\one\one^\top(|\bV|^{\odot 2})^\top = \frac{N^2}{r^2}\one\one^\top,
    \end{equation}
    and substituting into \eqref{eq:untf-overlap-ineq} gives the result.
\end{proof}

The next result gives the exact dimension of the subspace on which the inequality \eqref{eq:etf-ineq} is sharp on the Steiner ETFs described in Proposition~\ref{prop:steiner-etfs} (and in greater detail in the original work \cite{fickus:12:steiner}).

\begin{proposition}
    Let $\bv_1, \dots, \bv_N \in \RR^r$ be a Steiner ETF constructed from a $(2, k, v)$-Steiner system and a Hadamard matrix of suitable size.
    Let $\bR \in \RR^{r \times r}$ have entries $R_{k \ell} = \sum_{i = 1}^N |(\bv_i)_k|^2|(\bv_i)_\ell|^2$.
    Then,
    \begin{equation}
        \dim\left(\ker\left(\frac{1 - \frac{1}{r}}{1 - \frac{1}{N}}\bm I_r + \frac{\frac{N}{r} - 1}{r(1 - \frac{1}{N})}\one\one^\top - \bR\right)\right) = v.
    \end{equation}
    When the Steiner system is a finite projective plane, then the inequality \eqref{eq:etf-ineq} is an equality of matrices.
\end{proposition}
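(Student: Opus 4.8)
The plan is to reduce the whole statement to the spectral theory of the Steiner system's incidence matrix $\bN$. First I would compute $\bR$ explicitly from the construction of Proposition~\ref{prop:steiner-etfs}. Since $\bH \in \CC^{(1+\rho)\times(1+\rho)}$, each point contributes a block of $1+\rho$ columns to $\bV$, and the columns associated with a point $j$ are supported exactly on the rows indexing the $\rho$ blocks through $j$, every nonzero entry having common modulus $\rho^{-1/2}$. Thus $|\bV|^{\odot 2} = \rho^{-1}\bS$, where $\bS$ is a $0/1$ matrix whose $(m,(j,\ell))$ entry equals $\bN_{jm}$, independent of the Hadamard-column index $\ell$. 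Summing the $1+\rho$ identically-supported columns in each point-block gives
\[
    \bR = (|\bV|^{\odot 2})(|\bV|^{\odot 2})^\top = \frac{1+\rho}{\rho^2}\,\bN^\top\bN,
\]
whose $(m,m')$ entry is $\frac{1+\rho}{\rho^2}|B_m \cap B_{m'}|$.

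Next I would read off the spectrum of $\bN^\top\bN$. The defining property of a $(2,k,v)$-Steiner system---each point in $\rho$ blocks, each pair of points in exactly one common block---gives $\bN\bN^\top = (\rho-1)\bm I_v + \one_v\one_v^\top$, with eigenvalues $v+\rho-1$ (on $\one_v$, simple) and $\rho-1$ (multiplicity $v-1$). Transferring to $\bN^\top\bN$ (equal nonzero spectrum, and $\rank(\bN) = v$ since $\rho \geq 2$) yields eigenvalues $v+\rho-1$, $\rho-1$, and $0$ with multiplicities $1$, $v-1$, and $b-v$. The key alignment is $\bN^\top\one_v = k\one$, so $\one$ spans the top eigenspace of $\bN^\top\bN$; hence $\bm I_r$, $\one\one^\top$, and $\bR$ are simultaneously diagonalized, and the matrix in the statement, call it $\bm K$, is block-diagonal across these three eigenspaces.

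It then remains to evaluate $\bm K$ on each eigenspace, writing $\sigma \colonequals \frac{1-1/r}{1-1/N}$ and $\tau \colonequals \frac{N/r - 1}{r(1-1/N)}$ for the two scalar coefficients. A purely algebraic identity gives $\sigma + \tau r = N/r$, while the Steiner relations $r = b = v\rho/k$, $N = v(1+\rho)$, $k = (v+\rho-1)/\rho$ collapse $N/r$ to $\frac{(1+\rho)(v+\rho-1)}{\rho^2}$ and $\sigma$ to $1 - \rho^{-2}$. These are exactly the two nonzero eigenvalues of $\bR$: on $\one$ the eigenvalue of $\bR$ is $\frac{(1+\rho)(v+\rho-1)}{\rho^2} = N/r = \sigma + \tau r$, and on the $(\rho-1)$-eigenspace it is $\frac{(1+\rho)(\rho-1)}{\rho^2} = 1 - \rho^{-2} = \sigma$. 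Therefore $\bm K$ annihilates both $\one$ (eigenvalue $\sigma + \tau r - N/r = 0$) and the $(\rho-1)$-eigenspace (eigenvalue $\sigma - \sigma = 0$), while on $\ker(\bN)$, of dimension $b-v$, the $\one\one^\top$ and $\bR$ terms vanish and $\bm K$ acts as $\sigma\,\bm I = (1-\rho^{-2})\bm I \succ 0$. Hence $\dim\ker(\bm K) = 1 + (v-1) = v$, which is the first claim (and incidentally re-derives $\bm K \succeq 0$, i.e.\ Theorem~\ref{thm:etf-ineq}, in this case). For the second claim, a finite projective plane is a symmetric $2$-design, so $b = v$ (Proposition~\ref{prop:finite-planes}, equivalently Fisher's inequality at equality); the $\ker(\bN)$ block then disappears, every eigenvalue of $\bm K$ is $0$, and $\bm K = \bm 0$, i.e.\ \eqref{eq:etf-ineq} is an equality of matrices.

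The main obstacle is the combinatorial bookkeeping in the first two steps: extracting $\bR = \frac{1+\rho}{\rho^2}\bN^\top\bN$ from the Hadamard construction, in particular tracking that each point contributes $1+\rho$ columns of identical support, and pinning down the exact multiplicity $b-v$ of the zero eigenvalue of $\bN^\top\bN$ via $\rank(\bN)=v$. Once the scalar identities $\sigma = 1 - \rho^{-2}$ and $\sigma + \tau r = N/r$ are matched against the two eigenvalues of $\bR$, the three-way eigenspace decomposition settles both claims simultaneously, the projective-plane case being precisely the degenerate instance $b=v$ of the general kernel computation.
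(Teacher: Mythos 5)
Your proof is correct, and its skeleton coincides with the paper's: both compute $\bR = \frac{1+\rho}{\rho^2}\bN^\top\bN$, decompose $\RR^b$ into the same three invariant subspaces ($\langle \one \rangle$, a $(v-1)$-dimensional eigenspace on which $\bR$ acts as $1-\rho^{-2}$, and a $(b-v)$-dimensional zero eigenspace), and match eigenvalues via the same two scalar identities --- the paper's $k(1+\tfrac{1}{\rho}) = \sigma + \tau r$ and $1 - \rho^{-2} = \sigma$ are exactly yours, since $N/r = k(1+\tfrac{1}{\rho})$ --- so that the difference matrix is $\sigma$ times the projector onto the zero eigenspace, and the projective-plane claim is the degenerate case $b=v$ in both. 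The one genuine divergence is how the spectral data of $\bR$ is obtained: the paper rewrites $\bR = k\bm I_b + \bA_G$ with $\bA_G$ the adjacency matrix of the block intersection graph and imports its eigenvalues and eigenspace dimensions from the strongly-regular-graph structure cited from \cite{goethals:91}, whereas you derive everything from scratch using the design identity $\bN\bN^\top = (\rho-1)\bm I_v + \one_v\one_v^\top$, the transfer of nonzero spectrum between $\bN\bN^\top$ and $\bN^\top\bN$, and the alignment $\bN^\top\one_v = k\one_b$ (whence $\rho k = v+\rho-1$ is the top eigenvalue with eigenvector $\one$). Your route is self-contained and more elementary, and it makes explicit two facts the paper absorbs into the citation: that $\rank(\bN) = v$ (Fisher's inequality), and that the positive block of the remainder is precisely $\ker(\bN)$; the paper's route is shorter on the page and identifies that remainder as a graph-theoretic projector. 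Two minor points: your rank argument needs $\rho > 1$, i.e.\ $v > k$, which should be stated as a (harmless) nondegeneracy hypothesis; and you correctly read the Steiner construction as giving $N = v(1+\rho)$ vectors with $1+\rho$ identically-supported columns per point, which is consistent with the paper's own proof even though the paper's statement of the construction misstates $N$ as $\rho v$.
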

\begin{proof}
    Let $b = \frac{v(v - 1)}{k(k - 1)}$ be the number of blocks in the underlying Steiner system and $\rho = \frac{v - 1}{k - 1}$ be the number of blocks in which each point lies.
    Then, per the construction of \cite{fickus:12:steiner} described in Proposition~\ref{prop:steiner-etfs}, $r = b$ and $N = v(1 + \rho)$.

    We first compute $\bR$: letting $\bN \in \RR^{v \times b}$ be the incidence matrix of points and blocks of the Steiner system, we find
    \begin{equation}
        \bR = \frac{\rho + 1}{\rho^2}\bN^\top \bN = k \bm I_b + \bA_G,
    \end{equation}
    where $\bA_G$ is the adjacency matrix of the block intersection graph $G$ of the Steiner system.
    $G$ is a strongly regular graph \cite{goethals:91}, admitting a spectral expansion
    \begin{equation}
        \bA_G = k(\rho - 1)\what{\one}_b\what{\one}_b^\top + (\rho - 1 - k)\bP_{U_+} - k\bP_{U_-},
    \end{equation}
    where $\what{\one}_b = \frac{1}{\sqrt{b}}\one_b$, $U_{\pm}$ are eigenspaces orthogonal to one another and to the vector $\one$ and satisfying $U_+ \oplus U_- \oplus \one = \RR^b$, and $\bP_{U_{\pm}}$ are the projectors onto these subspaces.
    The corresponding dimensions are
    \begin{align}
      \dim(U_+) &= v - 1, \label{eq:bibd-srg-pos-dim} \\
      \dim(U_-) &= b - v.
    \end{align}
    We thus obtain the spectral expansion of $\bR$,
    \begin{equation}
        \bR = k\left(1 + \frac{1}{\rho}\right)\what{\one}_r\what{\one}_r^\top + \left(1 - \frac{1}{\rho^2}\right)\bP_{U_+}.
    \end{equation}
    Some algebraic manipulations show that the following identities hold between the eigenvalues of the right-hand side of \eqref{eq:etf-ineq} and those of $\bR$:
    \begin{align}
      k\left(1 + \frac{1}{\rho}\right) &= \frac{1 - \frac{1}{r}}{1 - \frac{1}{N}} + \frac{N - r}{r(1 - \frac{1}{N})}, \\
      1 - \frac{1}{\rho^2} &= \frac{1 - \frac{1}{r}}{1 - \frac{1}{N}},
    \end{align}
    and therefore in fact
    \begin{equation}
        \frac{1 - \frac{1}{r}}{1 - \frac{1}{N}}\bm I_r + \frac{\frac{N}{r} - 1}{r(1 - \frac{1}{N})}\one\one^\top - \bm R = \frac{1 - \frac{1}{r}}{1 - \frac{1}{N}}\bP_{U_-}.
    \end{equation}
    The first part of the result then follows from the dimension formula \eqref{eq:bibd-srg-pos-dim}.
    For the special case of finite projective planes, it suffices to note that in this case $r = b = v$, which is easily verified from the formulae in Proposition~\ref{prop:finite-planes}
\end{proof}

Finally, to illustrate how some more concrete results may be obtained from Theorem~\ref{thm:etf-ineq}, we give corollaries controlling ETF sparsity, spark, and the overlap of rows of the synthesis matrix $\bV$.
Recall that the \emph{spark} is defined as
\begin{equation}
    \mathsf{spark}(\bV) \colonequals \min_{\substack{\bx \in \RR^N \setminus \{\bm 0\} \\ \bV^\top\bx = \bm 0}} \|\bx\|_0 = \min_{\bx \in \row(\bV)^{\perp} \setminus \{ \bm 0\}} \|\bx\|_0.
\end{equation}
The natural dual measure of sparsity, sometimes called \emph{cospark}, is
\begin{equation}
    \mathsf{sparsity}(\bV) \colonequals \min_{\bx \in \row(\bV) \setminus \{ \bm 0\}} \|\bx\|_0,
\end{equation}
which gives control of the sparsity of the entire matrix $\bV$ by controlling each row.

\begin{corollary}
    \label{corr:etf-spark}
    Let $\bv_1, \dots, \bv_N \in \CC^r$ form an ETF, and let $\bV \in \CC^{r \times N}$ have the $\bv_i$ as its columns.
    Then,
    \begin{align}
      \mathsf{sparsity}(\bV) &\geq N\left(1 + \frac{(r - 1)^2}{N - 1}\right)^{-1}, \\
      \mathsf{spark}(\bV) &\geq N\left(1 + \frac{(N - r - 1)^2}{N - 1}\right)^{-1}.
    \end{align}
\end{corollary}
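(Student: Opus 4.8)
The plan is to prove the \emph{sparsity} (cospark) bound first and then obtain the spark bound from it by Naimark duality. By \eqref{eq:sparsity-spark-naimark} we have $\mathsf{sparsity}(\bV) = \mathsf{spark}(\bV^\prime)$, where the Naimark complement $\bV^\prime$ is itself an ETF of $N$ vectors but in dimension $N - r$. Since the complement is an involution, applying the sparsity bound to $\bV^\prime$ and rereading it through $\mathsf{spark}(\bV^{\prime\prime}) = \mathsf{spark}(\bV)$ yields the spark inequality with $r$ replaced by $N - r$, which is exactly the claimed $(N - r - 1)^2$. So it suffices to establish the first inequality for an arbitrary ETF.

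For the sparsity bound I would parametrize a nonzero $\bx \in \row(\bV)$ as $x_i = \bu^* \bv_i$ for some $\bu \in \CC^r$. Condition 1 of the UNTF definition gives $\|\bx\|_2^2 = \bu^*\big(\sum_i \bv_i\bv_i^*\big)\bu = \frac{N}{r}\|\bu\|_2^2$, while Cauchy--Schwarz on the support $S$ yields $\|\bx\|_2^2 \le |S|^{1/2}\|\bx\|_4^2$, hence $\|\bx\|_0 \ge \|\bx\|_2^4/\|\bx\|_4^4$. Everything then reduces to the single estimate $\|\bx\|_4^4 = \sum_i |\bu^*\bv_i|^4 \le \frac{N(N - 1 + (r-1)^2)}{r^2(N-1)}\|\bu\|_2^4$; substituting this with the value of $\|\bx\|_2^2$ and using $r^2 - 2r + N = (r-1)^2 + (N-1)$ produces exactly $N(1 + (r-1)^2/(N-1))^{-1}$.

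The crux is thus bounding $\sum_i |\bu^*\bv_i|^4$ for an arbitrary direction $\bu$, and this is where I expect the main obstacle to lie. The quadratic form of $\bR$ only records the diagonal quantities $|(\bv_i)_k|^2$, so $\bw^\top \bR \bw$ captures $\sum_i |\bu^*\bv_i|^4$ only when $\bu$ is a coordinate vector; for general $\bu$ there are cross terms invisible to $\bR$. The resolution is that ETFs are closed under left multiplication by a unitary $\bQ$ (which preserves both the Gram matrix and the hypotheses), so I may rotate so that $\bQ\bu = \|\bu\|_2\, \be_1$. Then $\bu^*\bv_i = \|\bu\|_2 (\bQ\bv_i)_1$, whence $\sum_i |\bu^*\bv_i|^4 = \|\bu\|_2^4\, \widetilde{R}_{11}$ for $\widetilde{\bR}$ the matrix $\bR$ built from $\bQ\bV$. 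Theorem~\ref{thm:etf-ineq} applied to $\bQ\bV$ gives $\widetilde{R}_{11} = \be_1^\top \widetilde{\bR}\be_1 \le c_1 + c_2$, where $c_1, c_2$ are the two coefficients on the right-hand side of \eqref{eq:etf-ineq}; a routine computation gives $c_1 + c_2 = \frac{N(N - 1 + (r-1)^2)}{r^2(N-1)}$, the constant needed above.

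Equivalently, and more cleanly if one prefers to avoid the rotation, I could apply the intermediate inequality \eqref{eq:ineq-H} from the proof of Lemma~\ref{lem:ineq} with $\bH = \bu\bu^*$, for which $\bv_i^*\bH\bv_i = |\bu^*\bv_i|^2$ and $\|\bH\|_F^2 = \|\bu\|_2^4$; inserting the explicit ETF inverse $(|\bX|^{\odot 2})^{-1}$ and using $\sum_i |\bu^*\bv_i|^2 = \frac{N}{r}\|\bu\|_2^2$ isolates $\sum_i |\bu^*\bv_i|^4$ and delivers the same bound. Either way the remaining work is arithmetic simplification, so I anticipate no substantive difficulty beyond tracking constants and treating the degenerate cases $r = 1$ and $r = N$ separately, where the frame hypotheses may fail but the stated bounds hold trivially.
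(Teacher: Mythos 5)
Your proposal is correct and follows essentially the same route as the paper: parametrize row-space vectors as $\bV^*\bu$, rotate by a unitary so that $\sum_i |\bu^*\bv_i|^4$ becomes the $(1,1)$ entry in the matrix inequality \eqref{eq:etf-ineq}, combine with Cauchy--Schwarz and the tight-frame identity $\|\bV^*\bu\|_2^2 = \frac{N}{r}\|\bu\|_2^2$, and then pass to the Naimark complement to get the spark bound from the sparsity bound. Your alternative route via \eqref{eq:ineq-H} with $\bH = \bu\bu^*$ is also valid (the constants work out identically) but is only a cosmetic variant of the same argument.
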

\begin{proof}
    Let $\by \in \CC^r$ with $\|\by\|_2 = 1$.
    Let $\bU \in \CC^{r \times r}$ be a unitary matrix whose first column is $\by$.
    We apply \eqref{eq:etf-ineq} to the ETF with synthesis matrix $\bU\bV$, whose first row is $\by^*\bV$.
    Comparing the entries in the upper left corners of the matrices on either side of the inequality, we find
    \begin{equation}
        \|\bV^*\by\|_4^4 \leq \frac{1 - \frac{1}{r}}{1 - \frac{1}{N}} + \frac{\frac{N}{r} - 1}{r(1 - \frac{1}{N})} = \frac{\frac{N}{r} + r - 2}{r(1 - \frac{1}{N})}.
        \label{eq:spark-1}
    \end{equation}
    Writing $\bx = \bV^*\by$, note that in general, by the Cauchy-Schwarz inequality,
    \begin{equation}
        \|\bx\|_2^4 = \left(\sum_{i = 1}^N |x_i|^2\right)^2 \leq \left(\sum_{i = 1}^N |x_i|^4\right)\left(\sum_{i = 1}^N \mathbb{1}\{x_i \neq 0\}\right) = \|\bx\|_4^4 \cdot \|\bx\|_0.
    \end{equation}
    In our case, $\|\bx\|_2^2 = \by^*\bV\bV^*\by = \frac{N}{r}$, and thus
    \begin{equation}
        \|\bV^*\by\|_0 \geq \frac{\|\bV^*\by\|_2^4}{\|\bV^*\by\|_4^4} \geq \frac{N^2}{r^2} \cdot \frac{r(1 - \frac{1}{N})}{\frac{N}{r} + r - 2} = N\left(1 + \frac{(r - 1)^2}{N - 1}\right)^{-1}.
    \end{equation}
    This gives the sparsity result, and the spark result follows from the same argument applied to the Naimark complement by \eqref{eq:sparsity-spark-naimark}.
\end{proof}
We compare this bound to the NERF and Gershgorin bounds of Proposition~\ref{prop:gershgorin-spark-bound} and Proposition~\ref{prop:nerf-spark-bound} respectively in Table~\ref{tab:spark-bounds}, and find that on the Naimark complements of infinite families of ETFs with dimensions scaling as $N \sim r^{3/2}$, our bound gives an improvement of sub-leading order on the NERF bound.
On the other hand, both our bound and the NERF bound are incomparable in general to the Gershgorin circle bound on such ETFs, sometimes being superior and sometimes inferior depending on the specific construction.

The final corollary we mention shows that the overlap between sparsity patterns of distinct rows of an ETF in fact has a certain ``typical'' value for a given pair of dimensions $r$ and $N$, from which its possible deviations are bounded.
\begin{corollary}
    Let $\bv_1, \dots, \bv_N \in \CC^r$ form an ETF, and let $\bV \in \RR^{r \times N}$ have the $\bv_i$ as its columns.
    Let $\ba, \bb \in \row(\bV)$ with $\la \ba, \bb \ra = 0$ and $\|\ba\|_2^2 = \|\bb\|_2^2 = \frac{N}{r}$ (for instance, two distinct rows of $\bV$).
    Let $D \colonequals \frac{N}{r^2}(1 + \frac{(r - 1)^2}{N - 1})$ and $E \colonequals \frac{\frac{N}{r} - 1}{r(1 - \frac{1}{N})}$.
    Then, $D \geq \|\ba\|_4^4$ and $D \geq \|\bb\|_4^4$, and
    \begin{equation}
        \left| \la |\ba|^{\odot 2}, |\bb|^{\odot 2} \ra - E \right|^2 \leq \left(D - \|\ba\|_4^4\right)\left(D - \|\bb\|_4^4\right).
    \end{equation}
\end{corollary}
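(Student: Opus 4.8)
The plan is to reduce the statement to the $2 \times 2$ principal minors of the matrix inequality \eqref{eq:etf-ineq}, applied to a suitably rotated copy of the frame. First I would realize $\ba$ and $\bb$ as images of an orthonormal pair: since $\ba, \bb \in \row(\bV)$, write $\ba = \by^*\bV$ and $\bb = \bz^*\bV$ for (unique) $\by, \bz \in \CC^r$. The tight-frame identity $\bV\bV^* = \frac{N}{r}\bm I_r$ turns the hypotheses $\|\ba\|_2^2 = \|\bb\|_2^2 = \frac{N}{r}$ and $\la \ba, \bb \ra = 0$ into $\|\by\|_2 = \|\bz\|_2 = 1$ and $\la \by, \bz \ra = 0$; that is, $\by, \bz$ are orthonormal. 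I would then extend them to a unitary $\bU \in \CC^{r \times r}$ having $\by^*$ and $\bz^*$ as its first two rows, exactly as in the proof of Corollary~\ref{corr:etf-spark}.

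Next I would apply Theorem~\ref{thm:etf-ineq} to $\bU\bV$. Since $\bU$ is unitary, $\bU\bV$ is the synthesis matrix of an ETF of the same dimensions $N, r$ (with the same Gram matrix), so \eqref{eq:etf-ineq} holds for it: writing $\bR^\prime \colonequals (|\bU\bV|^{\odot 2})(|\bU\bV|^{\odot 2})^\top$ and $\bM$ for the right-hand side of \eqref{eq:etf-ineq}, we have $\bM - \bR^\prime \succeq \bm 0$. The first two rows of $\bU\bV$ have the same entrywise magnitudes as $\ba$ and $\bb$, so the top-left $2 \times 2$ block of $\bR^\prime$ is $\left(\begin{smallmatrix} \|\ba\|_4^4 & \la |\ba|^{\odot 2}, |\bb|^{\odot 2} \ra \\ \la |\ba|^{\odot 2}, |\bb|^{\odot 2} \ra & \|\bb\|_4^4 \end{smallmatrix}\right)$. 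For $\bM = \frac{1 - 1/r}{1 - 1/N}\bm I_r + E\,\one\one^\top$, the corresponding block is $\left(\begin{smallmatrix} \frac{1 - 1/r}{1 - 1/N} + E & E \\ E & \frac{1 - 1/r}{1 - 1/N} + E \end{smallmatrix}\right)$, and here I would record the elementary identity $\frac{1 - 1/r}{1 - 1/N} + E = D$, which is the same scalar simplification already implicit in \eqref{eq:spark-1}.

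Finally, every principal submatrix of a psd matrix is psd, so the top-left $2 \times 2$ block of $\bM - \bR^\prime$, namely $\left(\begin{smallmatrix} D - \|\ba\|_4^4 & E - \la |\ba|^{\odot 2}, |\bb|^{\odot 2} \ra \\ E - \la |\ba|^{\odot 2}, |\bb|^{\odot 2} \ra & D - \|\bb\|_4^4 \end{smallmatrix}\right)$, is psd. Invoking the characterization that a real symmetric $2 \times 2$ matrix is psd if and only if its diagonal entries and its determinant are nonnegative yields simultaneously $D \geq \|\ba\|_4^4$, $D \geq \|\bb\|_4^4$, and $(D - \|\ba\|_4^4)(D - \|\bb\|_4^4) \geq (E - \la |\ba|^{\odot 2}, |\bb|^{\odot 2} \ra)^2$, the last of which is the claimed inequality. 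I expect no substantive obstacle beyond bookkeeping: the only points needing care are the conjugate-transpose conventions in the identification $\ba = \by^*\bV$ (so that $|\ba|$ agrees entrywise with the first row of $|\bU\bV|$) and the verification of the scalar identity $\frac{1 - 1/r}{1 - 1/N} + E = D$.
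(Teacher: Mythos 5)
Your proposal is correct and follows essentially the same route as the paper's proof: rotate by a unitary so that $\ba$ and $\bb$ become the first two rows of the synthesis matrix (the trick already used for Corollary~\ref{corr:etf-spark}), compare the upper-left $2 \times 2$ minors of the two sides of \eqref{eq:etf-ineq}, and conclude via positive semidefiniteness of that $2 \times 2$ difference. The only cosmetic difference is that you extract all three claimed inequalities at once from the psd characterization of the $2 \times 2$ block, whereas the paper cites the earlier corollary's diagonal-entry argument for $D \geq \|\ba\|_4^4$, $D \geq \|\bb\|_4^4$ and takes the determinant only for the cross-term bound.
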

\begin{proof}
    Note that $D$ is equal to the diagonal entries and $E$ to the off-diagonal entries of the right-hand side of \eqref{eq:etf-ineq}.
    Thus, $D \geq \|\ba\|_4^4$ and $D \geq \|\bb\|_4^4$ follows by the previous argument of Corollary~\ref{corr:etf-spark}.
    As in that argument, we may without loss of generality assume $\ba$ and $\bb$ occur as the first two rows of $\bV$.
    Then, the relation between the upper left $2 \times 2$ minors of either side of \eqref{eq:etf-ineq} is
    \begin{equation}
        \left[\begin{array}{cc} \|\ba\|_4^4 & \la |\ba|^{\odot 2}, |\bb|^{\odot 2} \ra \\ \la |\ba|^{\odot 2}, |\bb|^{\odot 2} \ra & \|\bb\|_4^4 \end{array}\right] \preceq \left[\begin{array}{cc} D & E \\ E & D \end{array}\right].
    \end{equation}
    Taking the determinant of the difference of the right- and left-hand sides of the above then gives the result.
\end{proof}

\begin{table}
    \centering
    \small
    \begin{tabular}{llllll}
      \hline
      \textbf{Construction} & $\bm N$ & $\bm r$ & \textbf{Gershgorin} & \textbf{NERF} & \textbf{Our Bound} \\
      \hline
      Steiner: Affine \cite{fickus:12:steiner} & $q^3 + 2q^2$ & $q^2 + q$ & $q^2 + q$ & $q^2 + q - 1$ & $q^2 + q$ \\
      Steiner: Projective \cite{fickus:12:steiner} & $q^3 + 3q^2 + 3q + 2$ & $q^2 + q + 1$ & $q^2 + 2q + 2$ & $q^2 + 3q + 1$ & $q^2 + 3q + 2$ \\
      Polyphase BIBD \cite{fickus:17} & $q^3 + 1$ & $q^2 - q + 1$ & $q^2 + 1$ & $q^2 + q - 1$ & $q^2 + q$ \\
      Hyperovals \cite{fickus:16:ovals} & $q^3 + q^2 - q$ & $q^2 + q - 1$ & $q^2$ & $q^2 - q + 3$ & $q^2 - q + 4$ \\
      \hline
    \end{tabular}
    \caption{\textbf{Spark Lower Bound Comparison.} We tabulate three lower bounds on the spark of ETFs of $N$ vectors in $\RR^r$ formed as the Naimark complements of ETFs belonging to infinite families with $N \sim r^{3 / 2}$.
      We express the parameters of these Naimark complements in terms of an integer parameter $q$ for each family.
      (There is a fifth construction based on difference sets in finite abelian groups having this scaling \cite{ding:07}, but its parameters are the same as those of Steiner ETFs built from finite affine planes, so we omit it from the table.)
      The first two bounds are from prior work described in Propositions~\ref{prop:gershgorin-spark-bound} and \ref{prop:nerf-spark-bound}, and the last is from our Corollary~\ref{corr:etf-spark}.
      Since the spark is always an integer, the ceiling function may be applied to any valid lower bound on it to obtain a stronger lower bound.
      Thus, when the bound expressions do not simplify to integer values in terms of $q$, we take a partial fraction decomposition and give a bound holding for sufficiently large $q$.}
    \label{tab:spark-bounds}
\end{table}

\section{Open Problems}

\label{sec:problems}

Returning to the more general family of relaxations of the cut polytope described in Definition~\ref{def:pm-matrix}, the main question we propose for investigation is the following.

\begin{question}
    Given the Gram matrix of a real ETF of $N$ vectors in $\RR^r$, what is the largest $d$ for which it belongs to $\fE_d^N$?
    Does the answer depend only on $N$ and $r$?
\end{question}

\noindent
More specifically, we are interested in the details of the construction that would underlie such a result.
\begin{question}
    When $\bX$ is the Gram matrix of a real ETF and $\bX \in \fE_d^N$ with a ``witness'' $\bY$ per Definition~\ref{def:pm-matrix}, is there a tractable description of the eigenspaces of $\bY$?
\end{question}
\noindent
If this is the case, then we may hope to imitate the present approach: compute projectors to the analogous subspaces for complex ETFs, write down the positivity relation for these operators, and derive polynomial inequalities in the ETF entries.
It remains to be seen, however, whether such inequalities could still be interpreted as giving information about sparsity of ETFs, or whether they would control new quantities for higher degrees.

\bibliographystyle{plain}
\bibliography{main}

\end{document}